\newtheorem{Remark}{Remark}
\newtheorem{Corollary}{Corollary}
\newtheorem{Definition}{Definition}
\newtheorem{Problem}{Problem}
\newtheorem{Theorem}{Theorem}
\newtheorem{Lemma}{Lemma}
\newtheorem{Assumption}{Assumption}
\newtheorem{Property}{Property}
\newcommand{\bpropty}{\begin{Property}}
\newcommand{\epropty}{\end{Property}}
\newcommand{\bdefinition}{\begin{Definition}}
\newcommand{\edefinition}{\end{Definition}}
\newcommand{\bremark}{\begin{Remark}}
\newcommand{\eremark}{\end{Remark}}
\newcommand{\basm}{\begin{Assumption}}
\newcommand{\easm}{\end{Assumption}}
\newcommand{\blemma}{\begin{Lemma}}
\newcommand{\elemma}{\end{Lemma}}
\newcommand{\btheorem}{\begin{Theorem}}
\newcommand{\etheorem}{\end{Theorem}}
\newcommand{\EQQ}{\begin{eqnarray*}}
\newcommand{\ENN}{\end{eqnarray*}}
\newcommand{\EQ}{\begin{eqnarray}}
\newcommand{\EN}{\end{eqnarray}}
\newcommand{\nnum}{\nonumber}
\newcommand{\col}{\mbox{col}}
\newcommand{\R}{{\mathbb R}}
\begin{document}

\title*{Certainty Equivalence, Separation Principle, and Cooperative Output Regulation of Multi-Agent Systems by Distributed Observer Approach}
% Use \titlerunning{Short Title} for an abbreviated version of
% your contribution title if the original one is too long
\author{Jie~Huang}
% Use \authorrunning{Short Title} for an abbreviated version of
% your contribution title if the original one is too long
\institute{
%Name of First Author
\at Jie~Huang, Department of Mechanical and Automation
Engineering, The Chinese University of Hong Kong, Shatin, N.T., Hong
Kong, \email{jhuang@mae.cuhk.edu.hk.}
This book chapter was submitted to the book "Recent Advances and Future Directions on Adaptation and Control", Elsevier, in early May 2015
%\and Name of Second Author \at Name, Address of Institute \email{name@email.address}
}
%
% Use the package "url.sty" to avoid
% problems with special characters
% used in your e-mail or web address
%
\maketitle

\abstract*{Each chapter should be preceded by an abstract (10--15 lines long) that summarizes the content. The abstract will appear \textit{online} at \url{www.SpringerLink.com} and be available with unrestricted access. This allows unregistered users to read the abstract as a teaser for the complete chapter. As a general rule the abstracts will not appear in the printed version of your book unless it is the style of your particular book or that of the series to which your book belongs.
Please use the 'starred' version of the new Springer \texttt{abstract} command for typesetting the text of the online abstracts (cf. source file of this chapter template \texttt{abstract}) and include them with the source files of your manuscript. Use the plain \texttt{abstract} command if the abstract is also to appear in the printed version of the book.}

\abstract{The cooperative output regulation problem of  linear multi-agent systems was formulated and studied by the distributed observer approach in \cite{sh1, sh2}.
Since then, several variants and extensions have been proposed, and the technique of the distributed observer has also been applied to such problems as formation, rendezvous, flocking, etc.
In this chapter, we will first present a more general formulation of  the cooperative output regulation problem for linear multi-agent systems that
includes some existing versions of the cooperative output regulation problem as special cases.
Then, we will describe  a more general distributed observer. Finally, we will simplify the proof of the main results by more explicitly utilizing the separation principle and
the certainty equivalence principle.}

\section{Introduction} \label{sec1.1}

 The cooperative output regulation problem by distributed observer approach was first studied for linear multi-agent systems subject to  static communication topology in \cite{sh1}, and then for  linear multi-agent systems subject to dynamic communication topology in
\cite{sh2}.
The problem is interesting because its formulation includes the leader-following consensus, synchronization or formation as special cases.
In contrast with the output regulation problem of a single linear system \cite{Da,Fr,FW}, the information of the exogenous signal may not be available for every subsystem due to the communication constraints. Thus, information sharing, or, what is the same, cooperation among different subsystems is essential in the design of the control law.
We call a control law that satisfies the communication constraints as a distributed control law.
The core of the approach in \cite{sh1, sh2} is the employment of the so-called distributed observer, which provides the estimation of the leader's signal to each follower so that a distributed controller can be synthesized based on a purely decentralized controller and the distributed observer. Such an approach to designing  a distributed controller is known as  the certainty equivalence principle.

Since the publication of \cite{sh1, sh2}, several variants and extensions of \cite{sh1, sh2} have been proposed \cite{lfg, ldc, mydj,sra, SYSCL12}.
The objectives of this chapter are three folds. First, we will present a more general formulation of the cooperative output regulation problem for linear multi-agent systems that
includes some existing versions of the cooperative output regulation problem as special cases.
Second, we will describe a more general distributed observer. Third, we will simplify the proof of the main results by more explicitly utilizing the separation principle and
the certainty equivalence principle.

The cooperative output regulation problem by the distributed observer approach can also be generalized to some nonlinear systems such as multiple Euler-Lagrange systems
\cite{cai-huang-2014-ijgs} and multiple rigid-body systems \cite{cai-huang-2014-auto}. Moreover, the distributed observer approach
can also be applied to such problem as the leader-following flocking / rendezvous with connectivity preservation
\cite{DHautomatica, DHCDC14}.

It should be noted that the cooperative output regulation problem of multi-agent systems can also be handled by the distributed internal model approach  \cite{shh,Jiang1}. This approach has an additional advantage that it can tolerate
perturbations of the plant parameters, and it does not need to solve the regulator equations. A combined distributed internal model and distributed observer approach is proposed in \cite{ldc}.
Nevertheless, in this chapter, we will only focus on the distributed observer approach.

{\bf Notation.} $\otimes$
denotes the Kronecker product of matrices.
%Vector $\mathbbold{1}_N$
%denotes a $N$ dimensional column vector with all elements $1$.
%$||x||$ denotes the Euclidean norm of vector $x$. For a square matrix $A$,  $||A||$ denotes
%the induced norm of $A$ by the Euclidean norm, and
$\sigma(A)$ denotes the spectrum of a square matrix $A$.
For matrices $x_i\in \R^{n_i \times p}$, $i=1,\dots,m$,
col$(x_1,\dots,x_m)=[x_1^T,\dots,x_m^T]^T$.
We use
$\sigma(t)$ to denote
 a piecewise constant switching signal
$\sigma :~[0,+\infty)$ $\rightarrow \mathcal {P}=\{1,2,\dots,\rho\}$ for
some positive integer $\rho$ where $\mathcal {P}$ is called the
switching index set. We assume the switching instants
 $t_0=0,t_1,t_2,\dots$ of $\sigma$  satisfy $t_{k+1}-t_{k}\geq\tau>0$ for
some constant $\tau$ and for any $k\geq0$,  and $\tau$ is called the
dwell time. $I_n$ denotes the
identity matrix of dimension $n$ by $n$.

\section{Linear Output Regulation}\label{sec1.2}

In this section, we review the  linear output regulation problem for the
 class of linear time-invariant systems as follows:
 \begin{equation}\label{c1.e1.2.1}
\begin{split}
\dot{x}(t) & =   A x(t) + B u(t) + E v(t),~~x(0)=x_0,~~t \geq 0,\\
y_m (t) &= C_m x (t) + D_m u (t) + F_m v (t),\\
e(t) & =   {C} x(t) + D u(t) + F v(t), \\
\end{split}
\end{equation}
where $x\in \R^{n}$, $y_{m}\in \R^{p_{m}}$, $e\in \R^{p}$ and
$u\in \R^{m}$ are the state, measurement output, error output, and
input of the plant, and $v \in \R^q$ is the exogenous signal
generated  by an
exosystem\index{Exosystem} of the following form \EQ \dot{v} (t) & = &  S
v (t),~~v(0)=v_0,~~t\geq0, \label{exo} \EN
where $S$ is some constant matrix.

Typically, the tracking error $e$ is the difference between the system output $y$ and the reference input $r$, i.e., $e = y - r$  where $y  = {C} x + D u + F_1 v $ for some matrix $F_1$ and
$r = F_2 v$ for some matrix $F_2$. Thus, in (\ref{c1.e1.2.1}), we have $F = F_1 - F_2$.  The tracking error $e$  is assumed to be measurable, but it may not be the only
measurable variable available for feedback control. Using the
measurement output feedback control allows us to solve the output
regulation problem for some systems which cannot be solved by the error output feedback control.

For convenience, we put  the plant (\ref{c1.e1.2.1}) and the
exosystem (\ref{exo}) together into the following form
 \begin{equation}\label{c1.e1.comp}
\begin{split}
\dot{x} & =   A x + B u + E v, \\
\dot{v} & =   S v, \\
y_m &= C_m x  + D_m u  + F_m v, \\
e & =   {C} x + D u + F v,  \\
\end{split}\end{equation}
 and call
(\ref{c1.e1.comp}) a composite system with $\mbox{\rm    col} (x,
v)$ as the composite state.

In general, some components of the exogenous signal $v$, say, the reference inputs are  measurable and some other components of the exogenous signal $v$, say, the unknown external disturbances are not measurable. Denote the unmeasured and measured components of $v$ by $v_u \in \R^{q_u}$ and $v_m \in \R^{q_m}$, respectively, where $0 \leq q_u, q_m \leq q$ with
 $q_u + q_m = q$. Then, without loss of generality,  we can assume $v_u$ and $v_m$ are generated, respectively, by the following systems:
 \EQ \label{vum}
 \dot{v}_u = S_u v_u, ~~ \dot{v}_m = S_m v_m,
 \EN
 for some constant matrices $S_u$ and $S_m$. (\ref{vum}) is still in the form of (\ref{exo}) with $v = \col (v_u, v_m)$ and $S = \mbox{block diag} [S_u, S_m]$.

To emphasize that $v$ may contain both measurable and unmeasurable components, we can rewrite the plant (\ref{c1.e1.2.1}) as
 \begin{equation}\label{c1.e1.2.1x}
\begin{split}
\dot{x}(t) & =   A x(t) + B u(t) + E_u v_u(t) + E_m v_m (t) ,~~x(0)=x_0,~~t\geq0,\\
y_m (t) & =   {C}_m x(t) + D_m u(t) + F_{mu} v_u(t) + F_{mm} v_m(t),\\
e(t) & =   {C} x(t) + D u(t) + F_u v_u(t) + F_m v_m(t).\\
\end{split}\end{equation}
where $E = [E_{u} ~ E_{m}]$, $F_{m} = [F_{mu} ~ F_{mm} ] $, and $F = [F_{u} ~ F_{m}]$.

%It is assumed that A special case of the dynamic
%measurement output feedback control is the dynamic error output
%feedback control when $C_m = C,  D_m = D,  F_m = F$, i.e.,
%$y_m = e$. In many cases, the error output $e$ is not the only
%measurable variable available for feedback control. Using the
%measurement output feedback control allows us to solve the output
%regulation problem for some systems which cannot be solved by the
%error output feedback control.

We will consider the following so-called measurement output feedback control law:
 \begin{equation}\label{c1.c2m}
\begin{split}
u & =  K_z z + K_y y_m, \\
\dot{z} &= {\cal G}_1 z + {\cal G}_2 y_m, \\
\end{split}\end{equation}
 where
 $z \in \R^{n_z} $ with $n_z$ to be specified later, and $K_z \in \R^{m \times n_z}$,  $K_y \in \R^{m \times p_m}$, ${\cal
G}_1  \in \R^{n_z \times n_z}$, ${\cal G}_2 \in \R^{n_z \times
p_m} $ are constant matrices.

This control law contains the following four types of control laws as special cases.

\begin{description}
\item 1. Full information  when $y_m = \col (x, v)$ and $n_z=0$:
\EQ u & = & K_1 x + K_2 v, \label{c1} \EN where $K_1 \in \R^{m
\times n}$ and $K_2 \in \R^{m \times q}$ are constant matrices.

\item 2. (Strictly proper) Measurement output
feedback when $K_y = 0$:
 \begin{equation} \label{c1.c2a}
\begin{split}
u & =  K_z z,   \\
\dot{z} &= {\cal G}_1 z + {\cal G}_2 y_m. \\
\end{split}\end{equation}

\item 3.  Error output
feedback when $y_m = e$ and $K_y = 0$:
\begin{equation} \label{c1.c2}
\begin{split}
u & =  K_z z,   \\
\dot{z} &= {\cal G}_1 z + {\cal G}_2 e. \\
\end{split}\end{equation}

\item 4. Combined error feedback and feedforward  when $y_m = \col (e, v)$:
\begin{equation} \label{c1.c2b}
\begin{split}
u & =  K_z z  + K_{v} v,  \\
\dot{z} &= {\cal G}_1 z + {\cal G}_{e} e +  {\cal G}_{v} v,
\end{split}\end{equation}
where $K_y = [0_{m\times p},   K_v]$, and ${\cal G}_2 = [{\cal G}_{e},  {\cal G}_{v}]$.

\end{description}

Needless to say that the control law (\ref{c1.c2m}) contains cases other than the control laws (\ref{c1}) to
(\ref{c1.c2b}).

From the second equation of the plant (\ref{c1.e1.2.1}) and the first equation of the control law (\ref{c1.c2m}), the control input $u$ satisfies
\EQQ
u = K_z z + K_y (C_m x + D_m u  + F_m v),
\ENN
or
\EQ
(I_m - K_y D_m) u = K_y C_m x +  K_z z  + K_y F_m v.
\EN
Therefore, the control law (\ref{c1.c2m}) is well defined if and only if $I_m - K_y D_m$ is nonsingular. It can be easily verified that
 the four control laws (\ref{c1}) to
(\ref{c1.c2b}) all satisfy $K_y D_m = 0$. Thus, in what follows, we will assume $K_y D_m = 0$ though it suffices to assume $I_m - K_y D_m$ is nonsingular.
As a result, we have
\EQ
 u = K_y C_m x +  K_z z  + K_y F_m v.
\EN

Under the assumption that $K_y D_m = 0$, the control law
(\ref{c1.c2m}) can be put as follows:
\begin{equation} \label{c1.conv}
\begin{split}
u &= K_y C_m x +  K_z z  + K_y F_m v,  \\
\dot{z} &= ({\cal G}_2 (C_m + D_m K_y C_m)) x  + ({\cal G}_1 + {\cal G}_2 D_m K_z) z + {\cal G}_2 (F_m + D_m K_y F_m) v.
\end{split}\end{equation}

Thus, the closed-loop system composed of the plant
(\ref{c1.e1.2.1}) and the control law
 (\ref{c1.c2m}) can be put as follows:
\begin{equation} \label{c1.close}
\begin{split}
\dot{x}_c  &= A_c  x_c  + B_c v,  \\
e &= C_c  x_c  + D_c v,
\end{split}\end{equation}
where $x_c
=\mbox{\rm    col} (x, z)$, and \EQQ A_{c} &= & \left [
\begin{array}{cc}
A + B K_y C_m & BK_z  \\
{\cal G}_2 (C_m + D_m K_y C_m)  &  {\cal G}_1 + {\cal G}_2 D_m K_z
\end{array} \right ],~~
B_{c} =  \left [ \begin{array}{c}
E + B K_y F_m  \\
{\cal G}_2 (F_m + D_m K_y F_m)
\end{array} \right ], \nnum \\
C_{c} &=& [C+DK_yC_m ~~~ D K_z], ~~ D_{c} = F + DK_y F_m.  \ENN

In particular, for the full information control law (\ref{c1}),  we have $y_m = \col (x, v)$ and $n_z = 0$. Thus, $x_c = x$, $K_y C_m = K_1$,  $K_y F_m = K_2$.
As a result,  we have \EQQ
A_c &= & A + B K_1,~~B_c = E + B K_2, \nnum \\
C_c &= & {C} + D K_1,~~D_{c} = F + D K_2.  \ENN

We now describe the linear output regulation problem as follows.
\begin{Problem}  \label{problem0002}
Given the plant \eqref{c1.e1.2.1} and the exosystem \eqref{exo}, find the control law of the form (\ref{c1.c2m}) such that the closed-loop system has the following properties.
\begin{itemize}
  \item \textbf{Property 1}: The matrix $A_c$ is Hurwitz, i.e., all the
eigenvalues of $A_{c} $ have negative real parts;
  \item \textbf{Property 2}: For any $x_c(0)$ and $v(0)$,
$\lim\limits_{t\rightarrow\infty}e(t)=0.$
\end{itemize}
\end{Problem}

At the outset, we list some  standard assumptions needed for solving Problem \ref{problem0002}.

\basm \label{A1.1} $S $ has no eigenvalues with negative real
parts. \easm

\basm \label{A1.2} The pair $(A, B)$ is
stabilizable.\index{Stabilizable} \easm

\basm \label{A1.3} The pair $\left ( \left [C_{m}  ~~  F_{mu}
\right ], \left [
\begin{array}{cc}
A  & E_u \\
0  & S_u
\end{array} \right ] \right )
$ is detectable.\index{Detectable}
\easm

\basm \label{A1.x3}
The following linear equations
\begin{equation} \label{regulator}
\begin{split}
{X}S&={A}{X}+{B}{U}+{E},\\
0&={C}{X}+{D}{U}+{F},
\end{split}\end{equation}
admit a solution pair $({X},{U})$.
\easm

\bremark Assumption \ref{A1.2} is made so that Property 1, that is, the exponential stability of $A_{c}$, can be achieved by the state feedback control. Assumption \ref{A1.3} together with
Assumption \ref{A1.2} is to render the exponential stability of
$A_{c}$ by the measurement output feedback control. Assumption \ref{A1.1} is made only for convenience and
loses no generality. In fact, if Assumption \ref{A1.1} is violated, then, without loss of generality, we can assume $S = \mbox{block~diag} ~[S_1, S_2] $ where $S_1$ satisfies
Assumption \ref{A1.1}, and all the eigenvalues of $S_2$ have negative real parts. Thus, if a control law of the form (\ref{c1.c2m}) solves Problem 1 with the exosystem being given by $\dot{v}_1 = S_1 v_1$, then the same control law solves Problem 1 with the original exosystem  $\dot{v} = S v$.
This is because Property 1 is
guaranteed by Assumption \ref{A1.2} and / or Assumption \ref{A1.3},
 and, as long as the closed-loop system
has Property 1,  Property 2 will not be affected by exogenous signals
that exponentially decay to zero.
\eremark

\bremark \label{c1.rem5}
Equations (\ref{regulator}) are known as the regulator
equations \cite{Fr}. It will be shown in Theorem  \ref{c1.the102} and Remark \ref{remoi} that,  under Assumptions \ref{A1.1} to \ref{A1.3}, Problem \ref{problem0002} is solvable by a control law of the form (\ref{c1.c2m}) only if the regulator equations
are solvable. Moreover, if Problem \ref{problem0002} is solvable at all,  necessarily,
the trajectory of the closed-loop system starting from any initial condition is such that
\EQ \label{c1.lin1}
\lim_{t\rightarrow\infty}(x(t)-{X}v(t))=0~~\mbox{and}~~\lim_{t\rightarrow\infty}(u(t)-{U}v(t))=0.
\EN
Therefore, ${X}v$ and ${U}v$ are the {\it steady-state} state and the {\it steady-state} input of the
closed-loop system signal at which the tracking error $e$ is identically zero. Thus,  the steady
state behavior of the closed-loop system is completely
characterized by the solution of the regulator equations.
%For
%convenience, in the sequel,  $X v (t) $ and $U v (t)$ are called
%{\it zero-error constrained state} and {\it zero-error constrained
%control}.
\eremark

\bremark By Theorem 1.9 of \cite{huang2004}, for any matrices $E$ and $F$,  the regulator equations
(\ref{regulator}) are solvable if and only if  \EQ \mbox{\rm    rank}
\left [
\begin{array}{cc}
A - \lambda I & B \\
C & D
\end{array} \right ]
= n+p,  ~~\forall~~\lambda \in \sigma(S). \label{tzeros} \EN
Nevertheless, for a
particular pair of $(E, F)$, the regulator equations may still
have a solution even if  \eqref{tzeros} fails.
%This happens
%when \EQ \mbox{\rm    vec} \left ( \left [
%\begin{array}{c}
%E \\
%F
%\end{array}
%\right ] \right ) \in\mbox{\rm    Im} ({Q} ). \label{e1.3.10} \EN
%\label{c1.rem5a}
\eremark

\section{Solvability of the Linear Output Regulation Problem}\label{sec1.3}

In this section, we will study the solvability of Problem \ref{problem0002}. Let us first present the following lemma on the closed-loop system.

\blemma \label{c1.lem1} Suppose, under the control law (\ref{c1.c2m}),  the closed-loop system
(\ref{c1.close}) satisfies Property 1, i.e., $A_c$ is Hurwitz. Then,  the closed-loop system
(\ref{c1.close}) also satisfies  Property 2, that is, $\lim_{t\rightharpoonup\infty}e(t)=0$, if
there exists a matrix $X_{c}$ that satisfies the
following matrix equations:
\begin{equation} \label{c1.syl}
\begin{split}
X_{c} S &= A_{c} X_{c} + B_{c}, \\
0   &= C_{c} X_{c} +  D_{c}.
\end{split}\end{equation}
Moreover,  under the additional Assumption \ref{A1.1}, the closed-loop system
(\ref{c1.close}) also satisfies  Property 2 only if
there exists a unique matrix $X_{c}$ that satisfies (\ref{c1.syl}).

\elemma

\bremark
The proof is the same as that of Lemma 1.4 of \cite{huang2004}, and is omitted. Here we only note that, if $X_{c}$ satisfies (\ref{c1.syl}), then  the variable $
\bar{x} = x_c - X_{c} v$ satisfies \EQQ
\dot{\bar{x}} &=& A_{c} {\bar{x}}, \\
e &=& C_{c} \bar{x}. \ENN
Since $A_{c}$
is Hurwitz, $\lim_{t \rightarrow \infty} \bar{x} (t)
= 0$, and hence, $\lim_{t \rightarrow \infty} e (t)
= 0$.
Since the solvability of the first equation of \eqref{c1.syl}  is guaranteed as long as the eigenvalues of $A_c$ do not coincide with those of $S$. Thus, Assumption \ref{A1.1} is not necessary for the sufficient part of
Lemma \ref{c1.lem1}. It suffices to
require that the eigenvalues of $A_c$ do not coincide with those of $S$.
\label{c1.rem4} \eremark

\blemma\label{c1.lem3}
Under Assumption \ref{A1.1}, suppose there exists a
control law  of the form (\ref{c1.c2m}) such that Property 1 holds. Then, Property 2 also holds  if and only if there exist matrices $X$ and $U$ that
satisfy the regulator equations
\begin{equation}\label{e1.3.16}\begin{split}
X S &= A X + B U + E,\\
0   &= C X + D U + F.
\end{split}\end{equation}
\elemma
The proof is similar to that of Lemma 1.13 of \cite{huang2004} and is thus omitted.

Now let us first consider the full information case where the
control law is defined by two constant matrices $K_1$ and $K_2$. The two
matrices $K_1$ and $K_2$ will be called the feedback
gain and the  feedforward
gain, respectively.

 \btheorem \label{c1.the1}  Under Assumption \ref{A1.2}, let the
feedback gain $K_1$ be such that $(A + B K_1 )$ is Hurwitz. Then, Problem 1 is solvable by the full information
control law (\ref{c1}) if
Assumption \ref{A1.x3} holds and the feedforward gain $K_2$
is given by
\EQ \label{fw}
K_2 = U - K_1 X. \EN \etheorem

\begin{proof} Under Assumption \ref{A1.2}, there exists $K_1$ such
that $A_c=A + B K_1 $ is  Hurwitz. Thus, under the control law (\ref{c1}),  Property 1 is satisfied.
Under Assumption \ref{A1.x3},  let $\bar{x} = x - X v $ and $K_2$ be given by (\ref{fw}). Then we have
\begin{equation}\label{e1.3.3} \begin{split}
\dot{\bar{x}} &= (A+ B K_1 ) \bar{x}, \\
e  &= (C+ D K_1 ) \bar{x}.
\end{split}\end{equation}
Since  $(A+ B K_1 )$ is Hurwitz, $\bar{x} (t)$ and hence $e (t)$ will approach zero as $t$ tends to infinity.  Thus, Property 2 is also satisfied.
 \end{proof}

\bremark \label{remoi0}
By Lemma  \ref{c1.lem3}, Assumption \ref{A1.x3} is also necessary for the solvability of Problem 1 by the full information
control law (\ref{c1})  if Assumption  \ref{A1.1} also holds.
\eremark

We now turn to the construction of the measurement output feedback control law (\ref{c1.c2m}).  Since we have already known how to synthesize a
full information control law which takes the plant state $x$
and the exosystem state $v$ as its inputs, naturally, we seek to
synthesize a measurement output feedback control law by estimating
the state $x$ and the unmeasurable exogenous signal $v_u$.  To this end,  lump
the state $x$ and the unmeasured exogenous signals $v_u$ together to obtain the
following system:
\begin{equation}\label{e1.3.3} \begin{split}
\left [
\begin{array}{c}
\dot{x} \\
\dot{v}_u
\end{array} \right ]
&=\left [ \begin{array}{cc}
A  & E_u \\
0  & S_u
\end{array} \right ]
\left [ \begin{array}{c}
{x} \\
{v}_u
\end{array} \right ]
+ \left [ \begin{array}{c}
B \\
0
\end{array} \right ]u + \left [ \begin{array}{c}
E_m \\
0
\end{array} \right ]v_m,  \\
y_m &= [C_{m}  ~ F_{mu}] \left [ \begin{array}{c}
{x} \\
{v}_u
\end{array} \right ] + D_m u + F_{mm} v_m.
\end{split}\end{equation}
Employing the well known Luenberger observer\index{Luenburger observer} theory suggests
the following observer based control law:
\begin{equation} \label{e1.s4} \begin{split}
u &=  [K_1 ~~ K_{2u}] z + K_{2m} v_m,   \\
\dot{z} & = \left [ \begin{array}{cc}
A  & E_u \\
0  & S_u
\end{array} \right ] z
+ \left [ \begin{array}{c}
B \\
0
\end{array} \right ] u + \left [ \begin{array}{c}
E_m \\
0
\end{array} \right ]v_m
+ L (y_m - [C_{m}~~ F_{mu}] z  - D_m u - F_{mm} v_m),
\end{split}\end{equation}
where $K_{2u} \in \R^{m \times q_u}$, $K_{2m} \in \R^{m \times q_m}$,
and $L \in \R^{(n+q_u) \times p_m} $ is an observer gain matrix.

The control law  (\ref{e1.s4}) can be put in the following form
\begin{equation}\label{e1.s5xx}  \begin{split}
u &= K_z z + K_{2m} v_m, \\
\dot{z} &= {\cal G}_1 z + {\cal G}_{21} y_m + {\cal G}_{22} v_m,
\end{split}\end{equation}
where \EQQ
K_z &=& [K_1 ~~~ K_{2u}], \nnum \\
{\cal G}_1 & = & \left [ \begin{array}{cc}
A  & E_u \\
0  & S_u
\end{array} \right ] +
\left [ \begin{array}{c}
B \\
0
\end{array} \right ] K_z - L (\left[
                              \begin{array}{cc}
                                C_m&  F_{mu} \\
                              \end{array}
                            \right]
 + D_m K_z), \nnum \\~~
  {\cal G}_{21} &= & L, ~~  {\cal G}_{22} = \left [ \begin{array}{c}
B \\
0
\end{array} \right ] K_{2m}+ \left [ \begin{array}{c}
E_m \\
0
\end{array} \right ]  - L ( F_{mm} + D_m K_{2m}).
 \ENN

Since $v_m$ is measurable, there exists a matrix $C_v$ such that $v_m = C_v y_m$. Thus the control law (\ref{e1.s5xx}) can be further put into the standard form (\ref{c1.c2m}) with
$K_y = K_{2m} C_v$ and  ${\cal G}_{2} = {\cal G}_{21} + {\cal G}_{22} C_v$.

\btheorem \label{c1.the102}  Under Assumptions \ref{A1.2} and
\ref{A1.3}, Problem 1  is solvable by the
measurement output feedback control law (\ref{c1.c2m}) if  Assumption \ref{A1.x3} holds.
\etheorem

\begin{proof} First note
that, by Assumption \ref{A1.2}, there exists a state feedback gain
$K_1$ such that $(A + B K_1)$ is Hurwitz, and, by
Assumption \ref{A1.3}, there exist matrices $L_1$ and $L_2$ such
that \EQQ A_L = \left [
\begin{array}{cc}
A  & E_u \\
0  & S_u
\end{array} \right ] -
\left [ \begin{array}{c}
L_1 \\
L_2
\end{array} \right ] \left[
                              \begin{array}{cc}
                                C_{m}&  F_{mu} \\
                              \end{array}
                            \right] =
\left [ \begin{array}{cc}
A - L_1 C_{m}& E_u - L_1 F_{mu}   \\
- L_2 C_{m} & S_u - L_2 F_{mu}
\end{array}
\right ] \ENN is Hurwitz. Let $(X, U)$ satisfy
the regulator equations, $L = \mbox{col} (L_1, L_2)$, and $K_2 = U - K_1 X$, and partition $K_2$ as $K_2 =  [K_{2u}, K_{2m}]$.
Let $\bar{x} = (x - Xv)$, $\bar{u} = (u - Uv)$, and $z_e = \left [ \begin{array}{c}
x  \\
v_u
\end{array}
\right ] - z$. Then, it can be verified that
\EQQ
\bar{u} &=& [K_1, K_{2u}] \left [ \begin{array}{c}
x  \\
v_u
\end{array}
\right ] - K_z z_e  + K_{2m} v_m - Uv \\
 &=& - K_z z_e +
 K_1 {x} + K_2 v - (K_2 + K_1 X) v \\
 &= &- K_z z_e + K_1 \bar{x}.
\ENN
In terms of $\bar{x}$ and $z_e$, the closed-loop system is given by
\begin{equation} \label{dotxc}   \begin{split}
\dot{\bar{x}}  &= A \bar{x} + B \bar{u} = (A+B K_1) \bar{x} -  B K_z z_e, \\
\dot{z}_e &= A_L z_e.
\end{split}\end{equation}
Let $A_c$ be the closed-loop system matrix. Then
$\sigma(A_c)$ = $\sigma(A+B K_1)
\cup \sigma(A_L)$. Thus Property 1 is satisfied. To show $ \lim_{t \rightarrow \infty} e (t) = 0$, first note that
(\ref{dotxc}) implies that $\lim_{t \rightarrow \infty} \bar{x} (t) = 0$ and $\lim_{t \rightarrow \infty} z_e  (t) = 0$. Then note that   $e  = Cx + Du + Fv = C (x - Xv) + D (u - Uv) + (C X + DU + F)v = C (x - Xv) + D (u - Uv) = (C + D K_1) \bar{x} - D K_z z_e $.
\end{proof}

\bremark \label{remoi}
By Lemma  \ref{c1.lem3}, Assumption \ref{A1.x3} is also necessary for the solvability of Problem 1 by  a
measurement output feedback control law of the form (\ref{c1.c2m}) if Assumption  \ref{A1.1} also holds.
\eremark

Specializing (\ref{e1.s5xx})
 to the two special cases with $v = v_u$ and $v = v_m$, respectively,  gives the following two corollaries  of Theorem \ref{c1.the102}.

\begin{Corollary} \label{c1.the102c1}  Under Assumptions \ref{A1.2} to \ref{A1.x3} with $v_u = v$, Problem 1    is solvable by the
following observer based control law:
\begin{equation} \label{e1.s4c1} \begin{split}
u &=  [K_1 ~~ K_{2}] z,  \\
\dot{z} & =  \left [ \begin{array}{cc}
A  & E \\
0  & S
\end{array} \right ] z
+ \left [ \begin{array}{c}
B \\
0
\end{array} \right ] u
+ L (y_m - [C_{m}~~ F_{m}] z  - D_m u ),
\end{split}\end{equation}
where $L$
is an observer gain matrix of dimension $(n+q)$ by $p_m$.
\end{Corollary}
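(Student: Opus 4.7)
The plan is to derive this corollary as a direct specialization of Theorem \ref{c1.the102} rather than redoing the observer/separation argument from scratch. Since $v_u = v$ means $q_u = q$ and $q_m = 0$, the component $v_m$ and every quantity indexed by the second subscript $m$ on $v$ (namely $E_m$, $F_{mm}$, $K_{2m}$, $S_m$) simply drop out. Correspondingly $E_u$ collapses to $E$, $S_u$ to $S$, $F_{mu}$ to $F_m$, and $K_{2u}$ to $K_2$. My first step would be to check that, under these identifications, the control law (\ref{e1.s4}) turns into exactly (\ref{e1.s4c1}): the static feedforward $K_{2m} v_m$ vanishes, and the driving terms $E_m v_m$, $F_{mm} v_m$ inside the observer vanish, leaving the clean Luenberger observer for the pair $(x, v)$.

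Next I would verify that all three hypotheses of Theorem \ref{c1.the102} survive the specialization. Assumption \ref{A1.2} is unchanged; Assumption \ref{A1.x3} is unchanged; and Assumption \ref{A1.3}, which in the general case is detectability of $\bigl([C_m ~ F_{mu}], \mbox{block}[A, E_u; 0, S_u]\bigr)$, becomes detectability of $\bigl([C_m ~ F_m], \mbox{block}[A, E; 0, S]\bigr)$, which is precisely the hypothesis we need to pick an observer gain $L \in \R^{(n+q)\times p_m}$ making $A_L$ Hurwitz.

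The main conclusion then follows by invoking the error decomposition used in the proof of Theorem \ref{c1.the102}: set $K_2 = U - K_1 X$ with $(X,U)$ solving the regulator equations, define $\bar{x} = x - Xv$ and $z_e = \col(x, v) - z$, and observe that the closed-loop matrix becomes block-triangular with diagonal blocks $A + BK_1$ and $A_L$, both Hurwitz. Property 1 (Hurwitz $A_c$) is immediate from $\sigma(A_c) = \sigma(A+BK_1) \cup \sigma(A_L)$, and Property 2 follows from $e = (C+DK_1)\bar{x} - D[K_1~K_2] z_e$ together with $\bar{x}(t), z_e(t) \to 0$.

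I do not anticipate a real obstacle here; the only thing one must be careful about is bookkeeping, namely that in (\ref{e1.s4c1}) the symbol $K_2$ plays the role that $K_{2u}$ played in the general control law (\ref{e1.s4}), and that the observer gain $L$ is now $(n+q) \times p_m$ rather than $(n+q_u) \times p_m$. Once this identification is made explicit, the proof reduces to citing Theorem \ref{c1.the102}, so the corollary can be stated with a one-line proof noting the specialization $q_u = q$, $q_m = 0$.
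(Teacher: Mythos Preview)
Your proposal is correct and matches the paper's own treatment: the paper simply states that Corollary \ref{c1.the102c1} is obtained by specializing the general control law (\ref{e1.s5xx}) to the case $v = v_u$, without giving any further proof beyond citing Theorem \ref{c1.the102}. Your bookkeeping of how the various matrices collapse under $q_m = 0$ is exactly the content of that specialization, so a one-line citation of Theorem \ref{c1.the102} with the identification $q_u = q$, $q_m = 0$ is all that is needed.
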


\begin{Corollary} \label{c1.the102c2}  Under Assumptions \ref{A1.2} to \ref{A1.x3} with $v_m = v$, Problem 1  is solvable by the
following observer based control law:
\begin{equation} \label{e1.s4c2}   \begin{split}
u &=  K_1 z + K_2 v,  \\
\dot{z} & =  A z + Bu + E v
+ L (y_m - C_{m} z - F_{m} v - D_m u ),
\end{split}\end{equation}
where $L$
is an observer gain matrix of dimension $n$ by $p_m$.
\end{Corollary}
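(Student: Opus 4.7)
The plan is to specialize the construction used in the proof of Theorem \ref{c1.the102} to the degenerate case $v_u = \emptyset$, $v_m = v$, so that the augmented plant (\ref{e1.3.3}) collapses to the original plant with $v$ acting as a known input. In this case Assumption \ref{A1.3} reduces to the detectability of $(C_m, A)$, so Assumption \ref{A1.3} guarantees an observer gain $L \in \R^{n \times p_m}$ with $A - L C_m$ Hurwitz; Assumption \ref{A1.2} supplies a stabilizing $K_1$ with $A + B K_1$ Hurwitz; and Assumption \ref{A1.x3} supplies a solution $(X, U)$ of the regulator equations. I will then define the feedforward gain $K_2 = U - K_1 X$ exactly as in Theorem \ref{c1.the1}.

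Next, I would introduce the two error variables $\bar{x} = x - X v$ and $z_e = x - z$. The first step is to check that $z_e$ obeys a clean autonomous dynamics. Subtracting the $z$-equation of (\ref{e1.s4c2}) from $\dot{x} = Ax + Bu + Ev$ eliminates the known inputs $Bu$ and $Ev$ completely, and the correction term $L(y_m - C_m z - F_m v - D_m u) = L C_m (x - z)$ (since $y_m - D_m u - F_m v = C_m x$) yields $\dot{z}_e = (A - L C_m) z_e$. This is the key calculation: the feedforward terms $Ev$, $F_m v$, and $Bu$ are explicitly injected into the observer precisely so that all $v$- and $u$-dependent driving terms cancel in the error equation.

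Then I would express the control input in terms of the errors: $u = K_1 z + K_2 v = K_1(x - z_e) + (U - K_1 X) v$, so $u - U v = K_1 \bar{x} - K_1 z_e$. Substituting into the plant, and using $X S = A X + B U + E$ from (\ref{regulator}), I obtain $\dot{\bar{x}} = (A + B K_1)\bar{x} - B K_1 z_e$. Coupling this with $\dot{z}_e = (A - L C_m) z_e$ gives a block-triangular closed-loop matrix whose spectrum is $\sigma(A + B K_1) \cup \sigma(A - L C_m)$, both Hurwitz, which establishes Property 1.

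Finally, Property 2 follows by writing $e = C \bar{x} + D(u - Uv) + (C X + D U + F) v$ and using the second regulator equation to kill the last term, giving $e = (C + D K_1) \bar{x} - D K_1 z_e$, which tends to zero because both $\bar{x}(t)$ and $z_e(t)$ decay exponentially. I do not foresee a genuine obstacle here; the only place that requires care is the observer-error calculation, where one must verify that every $v$- and $u$-dependent term in the observer truly cancels its counterpart in the plant. Once this cancellation is confirmed, the remainder is a direct specialization of the reasoning already used in Theorem \ref{c1.the102}.
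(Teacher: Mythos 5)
Your proposal is correct and follows essentially the same route as the paper, which obtains this corollary by specializing the Luenberger-observer construction and proof of Theorem \ref{c1.the102} to the case $q_u=0$, $v_m=v$ (so $z_e=\col(x,v_u)-z$ collapses to $x-z$ and $K_z$ to $K_1$). Your error-coordinate calculations $\dot{z}_e=(A-LC_m)z_e$, $\dot{\bar{x}}=(A+BK_1)\bar{x}-BK_1z_e$, and $e=(C+DK_1)\bar{x}-DK_1z_e$ match the paper's argument exactly.
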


\section{Linear multi-agent systems and distributed observer}\label{sec1.4}

In this section, we turn to the cooperative output regulation problem for a group of linear  systems as follows:
\begin{equation} \label{eq020101}
\begin{split}
  \dot{x}_i & =A_ix_i+B_iu_i+E_{i} v,\\
     y_{mi} & =C_{mi}x_i+D_{mi}u_i+ F_{mi} v,\\
    e_i & =C_ix_i+D_iu_i+F_{i} v,\quad i=1,\dots,N,
\end{split}
\end{equation}
where $x_i\in \R^{n_i}$, $y_{mi}\in \R^{p_{mi}}$, $e_i\in \R^{p_i}$ and
$u_i\in \R^{m_i}$ are the state, measurement output, error output, and
input of the $i$th subsystem, and $v \in \R^q$ is the exogenous signal
generated by a so-called exosystem
as follows
\begin{equation}\label{exom}
     \dot{v} =S v, ~~ y_{m0} = C_{0} v,
\end{equation}
where $y_{m0} \in \R^{p_0}$ is the output of the exosystem.

Like the special case with $N=1$, the exogenous signal $v$ may also contain
 both unmeasured components  $v_u \in \R^{q_u}$ and measured components $v_m \in \R^{q_m}$,  where $0 \leq q_u, q_m \leq q$ with
 $q_u + q_m = q$. Then, like in (\ref{c1.e1.2.1}),  we can assume $v_u$ and $v_m$ are generated by (\ref{vum}). Correspondingly, we assume $C_0 = [0_{p_0 \times q_u}, C_{m0}]$
 for some matrix $
 C_{m0} \in \R^{p_0 \times q_m}$.
As a result, the plant (\ref{eq020101}) can be further written as follows.
\begin{equation} \label{eq020101um}
\begin{split}
\dot{x}_i &=A_ix_i+B_iu_i+E_{iu} v_{u}+  E_{im} v_{m}, \\
     y_{mi} & =C_{mi}x_i+D_{mi}u_i+F_{miu} v_{u}+  F_{mim} v_{m}, \\
    e_i & =C_ix_i+D_iu_i+F_{iu} v_{u}+  F_{im} v_{m},\quad i=1,\dots,N ,\\
\end{split}
\end{equation}
where $E_i = [E_{iu} ~ E_{im}]$, $F_{mi} = [F_{miu} ~ F_{mim} ] $, and $F_i = [F_{iu} ~ F_{im}]$.

Various  assumptions
are as follows.

\begin{Assumption}\label{ass0201}
$S$ has no eigenvalues with negative real parts.
\end{Assumption}

\begin{Assumption}\label{ass0202}
For $i=1,\dots,N$, the pairs $(A_i,B_i)$ are stabilizable.
\end{Assumption}

\begin{Assumption}\label{ass0203}
For $i=1,\dots,N$, the pairs $\left ( \left [C_{mi}  ~~  F_{miu}
\right ], \left [
\begin{array}{cc}
A_i  & E_{iu} \\
0  & S_u
\end{array} \right ] \right )
$ are detectable.
\end{Assumption}
\begin{Assumption}\label{ass0204}
The linear matrix equations
\begin{eqnarray}\label{eq020x}
\left.
  \begin{array}{r}
X_iS=A_iX_i+B_iU_i+E_i\\
0=C_iX_i+D_iU_i+F_i
  \end{array}
\right. \quad i=1,\dots,N,
\end{eqnarray}
have solution pairs $(X_i,U_i)$.
\end{Assumption}

\bremark \label{remdec}
The system (\ref{eq020101}) is still in the form of (\ref{c1.e1.2.1}) with $x = \mbox{col} (x_1, \dots, x_N)$,  $u = \mbox{col} (u_1, \dots, u_N)$, $y_m = \mbox{col} (y_{m1}, \dots, y_{mN})$,
$e = \mbox{col} (e_1, \dots, e_N)$. Thus, if the state $v$ of the exosystem can be used by the control $u_i$ of each follower, then, by Theorem \ref{c1.the1}, under Assumptions
 \ref{ass0202} and \ref{ass0204},   the output regulation
problem of the system  (\ref{eq020101}) and the exosystem (\ref{exom}) can be solved by the following  \textbf{full information control law:}
\begin{equation}\label{deccontrol3011}
u_i = K_{1i}x_i+K_{2i}v,  \quad i=1,\dots,N,
 \end{equation}
 where $K_{1i}$, $i=1,\dots,N$,  are such that
$A_i+B_iK_{1i}$ are Hurwitz, and $K_{2i}= U_i - K_{1i} X_i$.

Under the additional Assumption  \ref{ass0203}, there exist $L_{i} \in \R^{(n_i+q_u) \times p_{mi}}$ such that
\EQQ \left [
\begin{array}{cc}
A_i  & E_{iu} \\
0  & S_u
\end{array} \right ] -
L_i \left[
                              \begin{array}{cc}
                                C_{mi}&  F_{miu} \\
                              \end{array}
                           \right ] \ENN are Hurwitz.
                           Partition $K_{2i}$ as $K_{2i} = [K_{2iu}, K_{2im}]$ with
$K_{2iu} \in \R^{m_i \times q_u}$, $K_{2im} \in \R^{m_i \times q_m}$.
Then, by Theorem \ref{c1.the102},  under Assumptions \ref{ass0202} to \ref{ass0204},
 the output regulation
problem of the system  (\ref{eq020101}) and the exosystem (\ref{exom}) can be solved by the following
 \textbf{measurement output feedback control law:}
\begin{equation}\label{deccontrol3021}
\begin{split}
    u_i&= [K_{1i} ~~ K_{2iu}] z_i + K_{2im} v_m, \quad i=1,\dots,N,\\
  \dot{z}_i & =  \left [ \begin{array}{cc}
A_i  & E_{iu} \\
0  & S_u
\end{array} \right ] z_i
+ \left [ \begin{array}{c}
B_i \\
0
\end{array} \right ] u_i + \left [ \begin{array}{c}
E_{im} \\
0
\end{array} \right ]v_m   \\
&+ L_i (y_{mi} - [C_{mi}~~ F_{miu}] z_i  - D_{mi} u_i - F_{mim} v_m ) . \\
 \end{split}
\end{equation}

% where $y_{m0} = C_0 v_{m}$ for some constant matrix such that $(C_0, S_m)$ is controllable.

%\begin{equation}\label{control3021x}
%\left\{\begin{split}
%    u_i&=K_{1i}\xi_i+K_{2i}\eta_i \quad i=1,\dots,N.\\
%       \dot{z}_i&=A_iz_i+B_iu_i+E_i\eta_i\\
%    &\qquad+L_i(C_{mi}z_i+D_{mi}u_i+F_{mi}\eta_i-y_{mi}) \\
%    \dot{\eta}_i&=S\eta_i+\mu\left(\sum_{j\in \mathcal {N}_i(t)}a_{ij}(t) C_0  (\eta_j-\eta_i)\right)\\
%\end{split}\right.
%\end{equation}

%In either control law
%
%which relies on the information of $y_{mi}$ and $y_0$. This control scheme is called purely decentralized control scheme and is impractical when $N$ is large.
%What makes our problem interesting is that we will consider the so-called distributed control law to achieve our objective.
\eremark

Nevertheless, in practice,  the communication among different subsystems of (\ref{eq020101})  is subject to some constraints due to, say, the physical distance among these subsystems.
Thus,  the exogenous signal  $v$ or the measurable exogenous signal $v_m$ may not be available for the control $u_i$ of all the followers.
Since, typically, $e_i = y_i - y_0$,  the tracking error $e_i$ may not
be available for the control $u_i$ of all the followers.
To describe the communication constraints among various subsystems, we view the system (\ref{eq020101}) and the system (\ref{exom}) as a
multi-agent system with (\ref{exom}) as the leader and the $N$ subsystems
of (\ref{eq020101}) as the followers, respectively. Let
$\bar{\mathcal{G}}_{\sigma(t)}=(\bar{\mathcal{V}},\bar{\mathcal{E}}_{\sigma(t)})$\footnote{See Appendix for
a summary of graph.} with  $\bar{\mathcal{V}}=\{0,1,\dots,N\}$ and
$\bar{\mathcal{E}}_{\sigma(t)}\subseteq \bar{\mathcal{V}}\times
\bar{\mathcal{V}}$ for all $t\geq0$ be a switching graph, where the node $0$ is associated
with the leader system \eqref{exom} and the node $i$, $i = 1,\dots,N$,
is associated with the $i$th subsystem of the system
\eqref{eq020101}. For $i=0,1,\dots,N$, $j=1,\dots,N$, $(i,j) \in
\bar{\mathcal{E}}_{\sigma(t)}$ if and only if  $u_j$ can use
$y_{mi}$ for control at time instant $t$. Let
$\bar{\mathcal{N}}_i(t)=\{j~|~(j,i)\in \bar{\mathcal{E}}_{\sigma(t)}\}$
denote the neighbor set of agent $i$ at time instant $t$.

The case where the network topology is static can be viewed as a
special case of switching network topology when the switching index
set contains only one element. We will use the simplified notation $\bar{\mathcal{G}}$ to denote a static graph.

We will consider the following class of control laws.
\begin{equation} \label{ctrlg}
\begin{split}
 u_i&=f_i(x_i, \xi_i), \quad i=1,\dots,N, \\
 \dot{\xi}_i&= g_i(\xi_i,  y_{mi}, \xi_j, y_{mj},  j \in \bar{\mathcal{N}}_i(t)), \\
\end{split}
\end{equation}
%
%\begin{subequations}\label{ctrlg}
%    \begin{align}
%      u_i&=f_i(x_i, \xi_i) \label{ctrlg.1}\\
%      \dot{\xi}_i&= g_i(\xi_i,  y_{mi}, \xi_j, y_{mj},  j \in \bar{\mathcal{N}}_i(t)), \label{ctrlg.2}
%      \end{align}
%\end{subequations}
where both $f_i$ and $g_i$ are linear in their argument, and $g_i$ is  time-varying if the graph $\bar{\mathcal{G}}_{\sigma (t)}$ is. It can be seen that, at each time $t \geq 0$, for any $i = 1,\dots, N$, $u_i$ can make use of
 $y_{m0}$ if and only if the leader is a neighbor of the subsystem $i$. Such a control law is called a \textbf{distributed control law}. If, for  $i = 1,\dots, N$, $f_i$ is independent of $x_i$, then the control law is called a \textbf{distributed measurement output feedback control law}. If, for  $i = 1,\dots, N$,
 $\bar{\mathcal{N}}_i(t) = \{0\}$, $\forall~t\geq 0~$, then the control law (\ref{ctrlg}) is called a \textbf{purely decentralized control law}. In particular,
(\ref{deccontrol3011}) and (\ref{deccontrol3021}) are called the \textbf{purely decentralized full information control law}, and the \textbf{purely decentralized measurement output feedback control law}.

We now describe our problem as follows.
\begin{Problem}\label{def0201}
  Given the systems (\ref{eq020101}), (\ref{exom}) and a switching graph
  $\bar{\mathcal{G}}_{\sigma(t)}$, find a distributed control
  law of the form (\ref{ctrlg})
  such that the closed-loop system has the following properties:
\begin{itemize}
  \item \textbf{Property 1}: The origin of the closed-loop system  with $v$ being set to zero is
asymptotically stable.
  \item \textbf{Property 2}: For any initial condition $x_i(0)$, $\xi_i(0)$, $i=1,\dots,N$, and
$v(0)$, the solution of the closed-loop system  is
such that
\begin{equation}
\lim_{t\rightarrow\infty}e_i(t)=0,\quad i=1,\dots,N.
\end{equation}
\end{itemize}
\end{Problem}

Clearly, the solvability of the above problem not only depends on the dynamics of the systems (\ref{eq020101}), (\ref{exom}), but also  the property of the graph
$\bar{\mathcal{G}}_{\sigma(t)}$.  A typical assumption on the graph
$\bar{\mathcal{G}}_{\sigma(t)}$ is as follows.
\begin{Assumption}\label{ass301}
There exists a subsequence $\{i_k\}$ of $\{i:i=0,1,\dots\}$ with
$t_{i_{k+1}}-t_{i_k}< \nu$ for some positive $\nu$ such that every
node $i=1,\dots,N$ is reachable from the node $0$ in the union graph
$\bigcup_{j=i_{k}}^{i_{k+1}-1}\bar{\mathcal{G}}_{\sigma(t_j)}$.
\end{Assumption}

\bremark
Assumption \ref{ass301} is similar to what was proposed in
\cite{jadbabaie2003,ni2010,sh2}, and will be called \textbf{jointly connected} condition in the sequel.
Since, under Assumption \ref{ass301}, the graph $\bar{\mathcal{G}}_{\sigma(t)}$ can be disconnected for all $t \geq 0$,  it is perhaps  the least stringent condition on the graph as opposed to some other conditions such as every time connected, or frequently connected. In particular, Assumption \ref{ass301} is satisfied if Assumption 2 of \cite{mydj} is. Thus the main result in \cite{mydj}
is essentially included in \cite{sh2} even though the approach in \cite{mydj} appears somehow different from that in \cite{sh2}.
\eremark

The static graph is a special case of the switching graph when $\rho = 1$. For this special case, Assumption \ref{ass301}  reduces to the following.

\begin{Assumption}\label{ass301static}
Every
node $i=1,\dots,N$ is reachable from the node $0$ in the static graph
$\bar{\mathcal{G}}$.
\end{Assumption}

\begin{Remark}\label{remark010601}
Let  ${\cal G}_{\sigma(t)}=({\cal V},{\cal E}_{\sigma(t)})$  denote
the subgraph of $\bar{\mathcal {G}}_{\sigma(t)}$ where ${\cal
V}=\{1,\dots,N\}$, and ${\cal E}_{\sigma(t)} \subseteq {\cal V} \times
{\cal V}$ is obtained from $\bar{{\cal E}}_{\sigma(t)}$ by removing
all edges between the node 0 and the nodes in ${\cal V}$. Let $\bar{\mathcal{A}}_{\sigma(t)}=[a_{ij} (t)]_{i,j = 0}^N$ denote the weighted adjacency
matrix  of $\bar{\mathcal{G}}_{\sigma(t)}$,  let $\mathcal {L}_{\sigma(t)}$ be  the Laplacian matrix of
${\mathcal {G}}_{\sigma(t)}$ and $\Delta_{\sigma(t)} = \mbox{diag} (a_{10} (t), \dots, a_{N0} (t))$.
Then, it is shown in Remark 14 of \cite{sh3} that,  under Assumption \ref{ass301}, the matrix $H_{\sigma(t)}=\mathcal {L}_{\sigma(t)} +\Delta_{\sigma(t)}$
has the property that all the eigenvalues of the matrix $\sum_{j=i_{k}}^{i_{k+1}-1}  H_{\sigma(j)}$ have positive real parts. Furthermore, if the graph ${\cal G}_{\sigma(t)}$ is undirected, then the matrix $\sum_{j=i_{k}}^{i_{k+1}-1}  H_{\sigma(j)}$ is positive definite. In particular, under Assumption \ref{ass301static}, the constant matrix $-H$ is Hurwitz.
\end{Remark}

\section{Some Stability Results}

As pointed out in Introduction, our approach is based on the employment of the distributed observer. To introduce the  distributed observer. Let us first consider the
 stability property for the following class of switched linear systems:
\begin{equation}\label{eq010801}
    \dot{x}(t)=\left(I_N\otimes A - \mu F_{\sigma(t)}\otimes
    (B K)\right)x(t), \quad \sigma(t)\in \mathcal {P},
\end{equation}
where $A\in \R^{n\times n}$, $B\in \R^{n\times m}$, and  $F_{\sigma(t)} \in \R^{n\times n}$ are given, and $\mu>0$ and $K\in \R^{m\times
n}$ are to be designed.

\begin{Assumption}\label{ass10}
 There exists a subsequence $\{i_k\}$ of $\{i:i=0,1,\dots\}$ with
$t_{i_{k+1}}-t_{i_k} \leq \nu$ for some positive $\nu$ such that
 all the eigenvalues of the matrix $\sum_{j=i_{k}}^{i_{k+1}-1}  F_{\sigma(j)}$ have positive real parts.
\end{Assumption}

The stability property of the system of the form (\ref{eq010801}) has been extensively studied in the literature. We summarize the main results in two lemmas corresponding to
the switching network and the static network, respectively, as follows.

\blemma \label{thedo1}  Under Assumption \ref{ass10}, suppose the pair $(A, B)$ is controllable. Then,

(i) If $A$ is marginally stable, i.e., there exists a unique positive definite matrix $P$ such that $PA+A^TP\leq0$,  and $F_{\sigma(t)}$ is symmetric,  then,  with
$\mu = 1$, and $K = B^T P
$, (\ref{eq010801})  is asymptotically stable;

 (ii) If $B = I_n$ and $A$ has no eigenvalues with positive real parts, then, with  any
$\mu >0$, and $K = I_n$, (\ref{eq010801}) is asymptotically stable.
\elemma

\bremark
The stability property of the system of the form (\ref{eq010801})  was first studied in \cite{sh3}.   Part (i) of Lemma  \ref{thedo1} was established in Theorem 1 of  \cite{sh3}.
Part (ii) of Lemma  \ref{thedo1} was established in Lemma 2 of \cite{sh2}.
As a corollary of Lemma 2 of  \cite{sh2}, under Assumption \ref{ass10}, for any $\mu >0$,  the following system
\EQ
\dot{x} = - \mu F_{\sigma (t)} x
\EN
is asymptotically stable. As a special case of this result, when  $\mathcal {P} = \{1\}$, the matrix  $F_{\sigma (t)}$ is constant \cite{HH}, and will be denoted by $F$.
For this special case, the result of Lemma  \ref{thedo1} can be  strengthened to the following.
\eremark

\blemma \label{thedo2}  Under Assumption \ref{ass10} with  $\mathcal {P} = \{1\}$, suppose the pair $(A, B)$ is stabilizable. Then,

(i) Let $P$ be the unique positive definite matrix
satisfying $PA+A^TP- P  B B^T P +I_n \leq 0$, and $\mu \geq
\delta^{-1}$ where  $\delta = \min \{Re (\lambda_i (F) \} $.
Then, with $K = B^T P$,  (\ref{eq010801})  is asymptotically stable;

(ii) If $B = I_n$, then, for any $A$,
(\ref{eq010801})  is asymptotically stable with $K= I_n$, and sufficiently large $\mu$.

\elemma

\bremark

The proof of Part (i) of Lemma \ref{thedo2} can be extracted from the proof of  Theorem 2 of \cite{shh}. In fact, under Assumption \ref{ass10} with  $\mathcal {P} = \{1\}$,
all the eigenvalues of $F$ have positive real part. Let $T$ be such that $TFT^{-1}=J$ is in the Jordan form of $F$. Denote the eigenvalues of $F$ by $\lambda_1,\dots,\lambda_N$. Then
$(I_N\otimes A)-\mu(F\otimes B K)=(T^{-1} \otimes I_n)((I_N\otimes A)-\mu(J\otimes BK))(T\otimes I_n)$. From the block triangular structure of $J$, we know that
the eigenvalues of $(I_N\otimes A)-\mu(F\otimes B K)$ coincide with those of $A-\mu \lambda_i BK$, $i=1,\dots,N$. Since the pair $(A, B)$ is controllable,  by Lemma 1 of \cite{tuna},  the algebraic Riccati
equation
\begin{equation}A^T {P}+{P}A-{P} B B^T{P}+I_n=0\label{eq050610x}\end{equation}  admits a unique
positive definite solution ${P}$. Moreover, for any  $\mu \geq \delta^{-1}$ where  $\delta = \min \{Re (\lambda_i (F) \} $,
the gain matrix  $K= B^T P$ is such that $A-\mu \lambda_i B K $ and hence $(I_N\otimes A)-\mu(F \otimes  B K)$ are  Hurwitz.

Part (ii)  of Lemma \ref{thedo2} was established in Theorem 1 of \cite{sh1}.
It is also a direct result of the fact that the  eigenvalues of the matrix $(I_N\otimes A)-\mu(F\otimes
I_n)$ are
$$\{\lambda_i(A)-\mu\lambda_j(F):~i=1,\dots,n,~j=1,\dots,N\},$$ where
$\lambda_i(A)$ and $\lambda_j(F)$ are the eigenvalues of $A$ and
$F$, respectively. Thus,  the matrix $(I_N\otimes A)-\mu(F\otimes
I_n)$ is Hurwitz for sufficiently large $\mu$, and is Hurwitz for any positive $\mu$ if $A$ does not have any eigenvalue with positive real part.
\eremark

\section{Solvability of the Cooperative Linear Output Regulation Problem}\label{sec1.5}

Given systems (\ref{eq020101}), (\ref{exom}) and the switching graph
  $\bar{\mathcal{G}}_{\sigma(t)}$ whose weighted adjacency matrix is denoted by
  $\bar{\mathcal {A}}_{\sigma (t)}=[a_{ij} (t)]_{i,j=0}^N$, we call
 the following {compensator}
\begin{equation}\label{do}
    \dot{\eta}_i = S_m \eta_i +\mu L_0 \left(\sum_{j\in \bar{\cal
N}_i(t)}a_{ij}(t) C_{m0} (\eta_j-\eta_i)\right), ~ i = 1, \dots,  N,
 \end{equation} where $\eta_0 = v_m$, $\mu>0$ and $L_0 \in \R^{q_m \times p_0}$ are two design parameters,   a
{distributed observer candidate} for $v_m$, and call it
a {distributed observer} for $v_m$ if, for any $v_m(0)$ and
$\eta_i (0)$, $i = 1, \dots,  N$,
\begin{equation}
    \lim_{t \rightarrow \infty} ({\eta}_i (t) - v_m (t)) = 0,~~ i = 1, \dots,  N.
 \end{equation}

Whether or not (\ref{do}) is a {distributed
observer} of  $v_m$  depends on both the pair $(C_{m0}, S_m)$ and the
property of the graph.

Let $\tilde{\eta}_i = ({\eta}_i  - v_m ) $, and $ \tilde{\eta} = \mbox{col} (\tilde{\eta}_1, \dots, \tilde{\eta}_N)$. Then,
the system  \eqref{do}  can be put in the following compact form
\begin{equation} \label{cpdo}
    \dot{\tilde{\eta}}=\left ( (I_N\otimes S_m)-\mu (H_{\sigma (t)} \otimes L_0 C_{m0})\right )\tilde{\eta}.
    \end{equation}
Thus, the system  \eqref{do} is a {distributed observer} of $v_m$ if and only if the system \eqref{cpdo} is asymptotically stable.

\bremark
Since $( (I_N\otimes S_m)-\mu (H_{\sigma (t)} \otimes L_0 C_{m0}))^T = (I_N\otimes S^T_m)-\mu (H^T_{\sigma (t)} \otimes C^T_{m0} L^T_0)$, system (\ref{cpdo}) is asymptotically stable if and only if
a system of the form (\ref{eq010801}) with $A= S_m^T$, $F_{\sigma (t)} = H^T_{\sigma (t)}$, $B = C^T_{m0}$, and $K = L^T_0$ is asymptotically stable. Moreover, by Remark \ref{remark010601}, under Assumption \ref{ass301},  all the eigenvalues of the matrix $\sum_{j=i_{k}}^{i_{k+1}-1}  H_{\sigma(j)}$ have positive real parts. Furthermore, if the graph ${\cal G}_{\sigma(t)}$ is undirected, then the matrix $\sum_{j=i_{k}}^{i_{k+1}-1}  H_{\sigma(j)}$ is positive definite.
\eremark

Corresponding to the two purely decentralized control laws  (\ref{deccontrol3011}) and (\ref{deccontrol3021}),  we can synthesize two types of distributed control laws as follows:

\textbf{1. Distributed full information control law:}
\begin{equation}\label{control3011}
\begin{split}
  u_i &= K_{1i}x_i+K_{2i}\eta_i, \quad i=1,\dots,N,\\
  \dot{\eta}_i  &= S\eta_i+\mu L_0 \left(\sum\limits_{j\in \bar{\mathcal {N}}_i(t)}a_{ij}(t) C_0 (\eta_j-\eta_i)\right),
\end{split}
\end{equation}
where $K_{1i}\in \R^{m_i\times n_i}$ are such that
$A_i+B_iK_{1i}$ are Hurwitz, $K_{2i} = U_i - K_{1i} X_i$, $\mu$ is some positive constant, and
$\eta_0 = v = v_m$.
% where $y_{m0} = C_0 v_{m}$ for some constant matrix such that $(C_0, S_m)$ is controllable.

\textbf{2. Distributed measurement output feedback control law:}
\begin{equation} \label{control3021x}
\begin{split}
u_i &=  [K_{1i} ~~ K_{2iu}] z_i + K_{2im} \eta_i,  \quad i=1,\dots,N,\\
\dot{z}_i & = \left [ \begin{array}{cc}
A_i  & E_{iu} \\
0  & S_u
\end{array} \right ] z_i
+ \left [ \begin{array}{c}
B_i \\
0
\end{array} \right ] u_i + \left [ \begin{array}{c}
E_{im} \\
0
\end{array} \right ] \eta_i \\
&+ L_i (y_{mi} - [C_{mi}~~ F_{miu}] z_i  - D_{mi} u_i - F_{mim} \eta_i ),  \\
\dot{\eta}_i&= S_m \eta_i+\mu L_0 \left(\sum_{j\in \bar{\mathcal {N}}_i(t)}a_{ij}(t) C_{m0}  (\eta_j-\eta_i)\right),
\end{split}
 \end{equation}
where $\eta_0 = v_m$.

\bremark
The control law (\ref{control3011}) contains the distributed state feedback control law in \cite{sh2} as a special case by  letting $L_0 = C_0 = I_q$, or what is the same,  $y_{m0} = v$,
and  the control law (\ref{control3021x}) contains the distributed measurement output feedback control law in \cite{sh2} as a special case by  letting $v_m = v$, and $L_0 = C_{m0} = I_q$.
\eremark

\bremark
Both of the control laws  (\ref{control3011}) and (\ref{control3021x}) are synthesized based on the certainty equivalence principle in the sense that
 they  are obtained from the purely decentralized control laws (\ref{deccontrol3011}) and (\ref{deccontrol3021})
by replacing $v$ in (\ref{deccontrol3011}) and $v_m$ in (\ref{deccontrol3021}) with $\eta_i$, respectively, where $\eta_i$ is generated by a distributed observer of the form (\ref{do}).

\eremark

In \cite{sh1} and \cite{sh2}, the solvability of the cooperative output regulation problem was established by
means of Lemma \ref{c1.lem1}, which incurred tedious matrix manipulation.   In what follows, we will further simplify the proof of the solvability of the problem by means of the following Lemmas.

\blemma \label{lem0}
Consider the linear
time-invariant system
\begin{align}\label{c2.zetasub}
     \dot{x} = A x + B u,~ t  \geq 0,
\end{align}
where $x \in \R^n$, $A \in \R^{n \times n}$ is Hurwitz, and $u \in \R^m$ is piecewise
continuous in $t$ and $\lim_{t \rightarrow \infty} u (t) = 0$. Then, for any initial condition $x
(0)$, $\lim_{t \rightarrow \infty} x (t) = 0$.
\elemma

\begin{proof} The conclusion follows directly from the fact that the system \eqref{c2.zetasub} is input-to-state stable with the input $u$ decays to the origin asymptotically (Example 2.14 of \cite{chbook}). A more elementary self-contained proof can be given as follows. For any $x (0)$, let
\begin{align}
    x(T)=e^{AT}x(0)+\int_{0}^T e^{A(T- \tau)}Bu(\tau)d \tau, ~T\geq 0.
\end{align}
and
\begin{align}\label{eq-chp02.0x17}
    x(t)=e^{A(t-T)}x(T)+\int_{T}^te^{A(t- \tau)}Bu(\tau)d \tau,~ t \geq T.
\end{align}
It suffices to show $\lim_{t \rightarrow \infty} x (t) = 0$.
Since $A$ is Hurwitz, we have $||e^{A(t-T)}||\leq k e^{-\lambda(t-T)}$ for some $k>0$ and $\lambda>0$. Thus, for any $T \geq 0$,  $\lim_{t \rightarrow \infty} ||e^{A (t-T)} x (T)|| = 0$.
We only need to show that, for sufficiently large $T$,  $\lim_{t \rightarrow \infty} || \int_{T}^te^{A(t- \tau)}Bu(\tau)d \tau || =0$. In fact,  for any $t\geq T$,
\begin{align}
\left |\left | \int_{T}^te^{A(t- \tau)}Bu(\tau)d \tau \right |\right|  &\leq  \int_{T}^tk e^{-\lambda(t- \tau)}||B||\,||u(\tau)||d \tau \nonumber\\
   &\leq  \frac{k||B||}{\lambda}\sup_{T\leq \tau \leq t}||u(\tau)|| (1-e^{-\lambda(t-T)})\nonumber\\
   &\leq  \frac{k||B||}{\lambda}\sup_{T\leq \tau \leq t}||u(\tau)||.\label{eq-chp02.0x18}
\end{align}
Since $\lim_{t \rightarrow \infty} u (t) = 0$, for any $\varepsilon>0$, there exists $T>0$, such that, for any $t\geq T$, $||u(t)||\leq \frac{\lambda}{k||B||}\varepsilon$.
Thus,  \eqref{eq-chp02.0x18} implies
\begin{align}
 \left |\left |  \int_{T}^te^{A(\tau-t_0)}Bu(\tau)d \tau \right |\right|    &\leq {\varepsilon}, ~~ t \geq T.\label{eq-chp02.0x19}
\end{align}
Thus, $\lim_{t \rightarrow \infty} x (t) = 0$.
\end{proof}

\blemma  \label{lem00}  Under Assumption \ref{A1.1},
suppose a control law of the form (\ref{c1.c2m}) solves the output regulation problem of the system (\ref{c1.e1.comp}). Let
$\delta_u: [0, \infty) \rightarrow \R^m$ and $\delta_z:  [0, \infty) \rightarrow \R^{n_z}$ be two piecewise continuous time functions such that $\lim_{t \rightarrow \infty} \delta_u (t) = 0$,
and $\lim_{t \rightarrow \infty} \delta_z (t) = 0$. Then the following control law
\begin{equation} \label{purt}
\begin{split}
u & =   K_z z + K_y y_m + \delta_u (t),  \\
\dot{z} &= {\cal G}_1 z + {\cal G}_2 y_m + \delta_z (t),
\end{split}
\end{equation}
is such that $\lim_{t \rightarrow \infty} e (t) = 0$.
\elemma

\begin{proof} Denote the closed-loop system composed of (\ref{c1.e1.comp}) and (\ref{c1.c2m})  by (\ref{c1.close}). Then, $A_c$ is Hurwitz.  By Lemma \ref{c1.lem1}, there exists a unique matrix $X_c$ that satisfies equation (\ref{c1.syl}).  Let $\bar{x}_c (t) = x_c(t)-{X_c }v(t)$. Then the closed-loop system composed of (\ref{c1.e1.comp}) and (\ref{purt}) satisfies
\EQQ
\dot{\bar{x}}_c &=& A_{c} {\bar{x}}_c + B_u \delta_u (t) +B_z \delta_z (t),  \\
e &=& C_{c} \bar{x}_c + D \delta_u (t),
\ENN
where $B_u = \col (B, {\cal G}_2 D_m)$ and $B_z = \col (0_{n \times n_z}, I_{n_z})$.

By Lemma \ref{lem0}, we have $\lim_{t \rightarrow \infty} \bar{x}_c (t) = 0$, and, thus,
$\lim_{t \rightarrow \infty} e (t) = 0$.
\end{proof}

From the proof of Lemma \ref{lem00}, we can immediately obtain the following result.

\begin{Corollary} \label{coro3}
Under Assumption \ref{A1.1},
suppose a control law of the form (\ref{c1.c2m}) solves Problem 1. Let ${\cal S} (t)$ be a piecewise continuous square matrix defined over $[0, \infty)$ such that $\dot{\eta} = {\cal S} (t) \eta$ is asymptotically stable, and $K_{u}$ and $K_{z}$ be any constant matrices.
Then, under the following control law
\begin{equation} \label{purt2}
\begin{split}
u & =   K_z z + K_y y_m + K_{u} \eta ,   \\
\dot{z} &= {\cal G}_1 z + {\cal G}_2 y_m + K_{z} \eta , \\
\dot{\eta} &= {\cal S} (t) \eta, \\
\end{split}
\end{equation}
the closed-loop system also satisfies the two properties in Problem 1.
\end{Corollary}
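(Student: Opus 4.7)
The plan is to reduce the corollary to the already-established Lemma \ref{lem00} by viewing the extra $\eta$-driven terms in (\ref{purt2}) as the piecewise-continuous, asymptotically-vanishing perturbations that Lemma \ref{lem00} allows. The key observation is that the $\eta$-subsystem is autonomous (decoupled from $x$, $z$ and $v$) and asymptotically stable by hypothesis, so for every initial condition $\eta(0)$ one has $\eta(t)\to 0$ as $t\to\infty$.

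For Property 2 I would set $\delta_u(t):=K_u\eta(t)$ and $\delta_z(t):=K_z\eta(t)$. Since the two gain matrices are constant, both $\delta_u$ and $\delta_z$ inherit piecewise continuity from $\eta(\cdot)$ and vanish at infinity. With this identification the closed loop formed from (\ref{c1.e1.comp}) and (\ref{purt2}) coincides, in the $(x,z)$-variables, with the closed loop formed from (\ref{c1.e1.comp}) and (\ref{purt}); Lemma \ref{lem00} therefore yields $\lim_{t\to\infty}e(t)=0$ for every initial condition.

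For Property 1 I would set $v=0$ and work with the resulting cascade
\[
\dot{x}_c = A_c x_c + (B_u K_u + B_z K_z)\eta,\qquad \dot{\eta}=\mathcal{S}(t)\eta,
\]
where $A_c$, $B_u$, $B_z$ are the matrices appearing in the proof of Lemma \ref{lem00}. The lower block is asymptotically stable by hypothesis, so $\eta(t)\to 0$ for every $\eta(0)$. Since $A_c$ is Hurwitz by hypothesis and the forcing signal $(B_u K_u+B_z K_z)\eta(t)$ tends to zero, Lemma \ref{lem0} applied to the $x_c$-subsystem gives $x_c(t)\to 0$. Hence the origin of the (time-varying) closed-loop system is asymptotically stable.

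No step poses a serious obstacle; everything reduces to the cascade structure and the vanishing-perturbation arguments already carried out in the proofs of Lemmas \ref{lem0} and \ref{lem00}. The only subtle point is that $\mathcal{S}(t)$ may be time-varying, so Property 1 cannot be certified by exhibiting a Hurwitz closed-loop matrix; this is why I would verify it via the cascade together with Lemma \ref{lem0} rather than by eigenvalue inspection, which is consistent with how Property 1 is phrased in Problem \ref{def0201}.
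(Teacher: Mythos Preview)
Your proposal is correct and follows exactly the route the paper intends: the paper's entire proof is the single remark ``From the proof of Lemma~\ref{lem00}, we can immediately obtain the following result,'' and your identification $\delta_u(t)=K_u\eta(t)$, $\delta_z(t)=K_z\eta(t)$ is precisely that reduction. Your explicit treatment of Property~1 via the cascade structure and Lemma~\ref{lem0} is more careful than the paper, which leaves it implicit, and your observation that the time-varying $\mathcal{S}(t)$ forces the asymptotic-stability interpretation (as in Problem~\ref{def0201}) rather than a Hurwitz-matrix check is well taken.
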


We now consider the solvability of Problem 2.
%We are now ready to state the cooperative output regulation problem
%under switching network  as follows:
%\begin{Definition}\label{def0201}
%Given the follower system (\ref{eq020101}), leader system
%\eqref{exo} and a dynamic network topology $\bar{\mathcal
%{G}}_{\sigma(t)}$, find the control
%law \eqref{control3011} or \eqref{control3021} such that \\
%1) The origin of the system  $\dot{x}_c=A_{c,\sigma(t)}x_c$ is
%exponentially  stable.\\
%2) For any initial condition $x_i(0)$, $\eta_i(0)$, $i=1,\dots,N$, and
%$v(0)$, the solution of the closed-loop system (\ref{eq020201}) is
%such that
%\begin{equation}
%\lim_{t\rightarrow\infty}e_i(t)=0,\quad i=1,\dots,N
%\end{equation}
%\end{Definition}

\begin{Lemma}\label{lem301}
Suppose the distributed observer \eqref{cpdo}  is asymptotically stable. Then,

(i) Under Assumptions \ref{ass0201}, \ref{ass0202}, \ref{ass0204},  Problem 2  is solved
 by the distributed full information control law
\eqref{control3011};

(ii) Under the additional Assumption \ref{ass0203}, Problem 2  is
solved by the distributed  measurement output feedback control law (\ref{control3021x}).
\end{Lemma}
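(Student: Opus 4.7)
The plan is to exploit the certainty equivalence / separation structure by writing the distributed control laws as the purely decentralized control laws of Remark~\ref{remdec} plus a vanishing perturbation driven by the observer error $\tilde{\eta}_i = \eta_i - v_m$, and then invoking Corollary~\ref{coro3} (or directly Lemma~\ref{lem00}) subsystem by subsystem. Since the distributed observer \eqref{cpdo} is assumed asymptotically stable, $\lim_{t\to\infty}\tilde{\eta}_i(t)=0$ for every $i$ and every initial condition, and this convergence is uncoupled from the plant states because the $\eta_i$-dynamics are autonomous (they only depend on $\eta_j$ and on $y_{m0}=C_{m0}v_m$).

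For part (i), I would first recall that under Assumptions~\ref{ass0201}, \ref{ass0202}, \ref{ass0204}, the purely decentralized law \eqref{deccontrol3011}, i.e.\ $u_i = K_{1i}x_i + K_{2i}v$, solves the output regulation problem for each subsystem, by Theorem~\ref{c1.the1}. Writing $\eta_i = v + \tilde{\eta}_i$ in \eqref{control3011} gives
\begin{equation*}
u_i = K_{1i}x_i + K_{2i}v + K_{2i}\tilde{\eta}_i,
\end{equation*}
which matches the structure of the perturbed control law in Corollary~\ref{coro3} with $\delta_u(t) = K_{2i}\tilde{\eta}_i(t) \to 0$. Hence Property~2, $\lim_{t\to\infty}e_i(t)=0$, follows for each $i$. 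For Property~1, set $v\equiv 0$ so that $\eta_0 = 0$; then $\tilde{\eta}=\eta$ and \eqref{cpdo} forces $\eta_i(t)\to 0$ exponentially (or asymptotically, depending on the graph regime). The closed loop in $(x_i,\eta_i)$ is then a cascade with $A_i+B_iK_{1i}$ Hurwitz driven by a decaying input $K_{2i}\eta_i$, so Lemma~\ref{lem0} gives $x_i\to 0$, and asymptotic stability of the origin follows.

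For part (ii), I would carry out the same substitution $\eta_i = v_m + \tilde{\eta}_i$ in \eqref{control3021x}. The control law then splits into the purely decentralized observer-based law \eqref{deccontrol3021} (which, under Assumptions~\ref{ass0201}--\ref{ass0204}, solves the $i$th output regulation problem by Theorem~\ref{c1.the102}) plus additive perturbations of the form $K_{2im}\tilde{\eta}_i$ on the control input and $\mathrm{col}(E_{im},0)\tilde{\eta}_i - L_i F_{mim}\tilde{\eta}_i$ on the $z_i$-dynamics. Both perturbations vanish as $t\to\infty$, so Corollary~\ref{coro3} applied to the $i$th composite plant--exosystem pair yields Property~2. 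Property~1 again follows from the cascade structure: setting $v=0$ makes $\eta_i\to 0$ by stability of \eqref{cpdo}, and the remaining $(x_i,z_i)$ dynamics reduce to the closed loop under \eqref{deccontrol3021} (whose system matrix is Hurwitz by the proof of Theorem~\ref{c1.the102}) driven by a decaying input, so Lemma~\ref{lem0} closes the argument.

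The main obstacle I anticipate is only a bookkeeping one: confirming that the perturbation terms generated by the substitution $\eta_i = v_m + \tilde{\eta}_i$ really appear in the additive form required by Corollary~\ref{coro3}, and that the autonomy of the $\eta_i$-dynamics from the plant variables allows treating $\tilde{\eta}_i(t)$ as an exogenous decaying signal rather than a feedback coupling. Once this cascade/triangular structure is made explicit, both parts reduce to Corollary~\ref{coro3} applied to each subsystem independently, and no further matrix manipulation (such as constructing a joint $X_c$ satisfying the multi-agent Sylvester equation as in the original proofs of \cite{sh1,sh2}) is needed.
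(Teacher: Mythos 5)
Your proposal is correct and takes essentially the same route as the paper: the paper likewise rewrites the distributed law as the purely decentralized law of Remark~\ref{remdec} plus additive perturbations in $\tilde{\eta}_i=\eta_i-v_m$, notes that the autonomous observer-error dynamics \eqref{cpdo} make these perturbations decay, and concludes via Corollary~\ref{coro3}. The only bookkeeping point you leave implicit is that in part (ii) the perturbation entering the $z_i$-equation is $\Gamma_i\tilde{\eta}_i$ with $\Gamma_i=\col(B_i,0)K_{2im}+\col(E_{im},0)-L_i(F_{mim}+D_{mi}K_{2im})$, because the perturbation of $u_i$ also propagates into $\dot{z}_i$; this is exactly the computation the paper carries out, and it preserves the additive vanishing form you need.
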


\begin{proof}
Part (i)
Let $K_{1i}\in \R^{m_i\times n_i}$ be such that
$A_i+B_iK_{1i}$ are Hurwitz, and $K_{2i} = U_i - K_{1i} X_i$, $i = 1, \dots, N$. Then, by Remark \ref{remdec},  the purely decentralized full information control law (\ref{deccontrol3011}) solves
Problem 2. Since the control law
\eqref{control3011} can be put in the following form:
\begin{equation} \label{eq020301}
\begin{split}
  u_i &= K_{1i}x_i + K_{2i} v + K_{2i} \tilde{\eta}_i, ~ \quad i=1,\dots,N,\\
 \dot{\tilde{\eta}} &= \left ( (I_N\otimes S )-\mu (H_{\sigma (t)} \otimes L_0 C_{ 0})\right ) \tilde{\eta}, \\
\end{split}
\end{equation}
where $\lim_{t \rightarrow \infty} \tilde{\eta} (t) = 0$. By Corollary \ref{coro3}, the proof is complete.

Part (ii)  Under the additional Assumption \ref{ass0203}, there exist   $L_{i} \in \R^{(n_i+q_u) \times p_{mi}}$ such that
 \EQQ A_{Li}  = \left [
\begin{array}{cc}
A_i  & E_{iu} \\
0  & S_u
\end{array} \right ] -
L_i \left[
                              \begin{array}{cc}
                                C_{mi}&  F_{miu} \\
                              \end{array}
                           \right ], ~ \quad i=1,\dots,N, \ENN are Hurwitz.
By Remark \ref{remdec}, Problem 2  can be solved by a control law of the form (\ref{deccontrol3021}).  Now denote the control law (\ref{deccontrol3021}) by $u_i = k_i (z_i, v_m), \dot{z}_i = g_i (z_i, k_i (z_i, v_m), y_{mi}, v_m)$, $i = 1, \dots, N$, and the control law
(\ref{control3021x})  by
\begin{equation} \label{decclose}
\begin{split}
 u_i &=   k_i (z_i, \eta_i),  \quad i=1,\dots,N,\\
\dot{z}_i &=  g_i (z_i, k_i (z_i,  \eta_i), y_{mi}, \eta_i), \\
 \dot{\tilde{\eta}} &= \left ( (I_N\otimes S_m)-\mu (H_{\sigma (t)} \otimes L_0 C_{m0})\right )\tilde{\eta}.\\
 \end{split}
\end{equation}
Then it is ready to verify that
\EQ \label{keta}
k_i (z_i, \eta_i) = k_i (z_i, \tilde{\eta}_i + v_m) = k_i (z_i, v_m) + K_{2im} \tilde{\eta}_i,
\EN
and \EQ \label{kzi}
&&g_i (z_i, k_i (z_i,  \eta_i), y_{mi}, \eta_i) \nnum \\
&=& g_i (z_i, k_i (z_i, v_m)+ K_{2im} \tilde{\eta}_i, y_{mi}, \tilde{\eta}_i + v_m) \nnum \\
&=& g_i (z_i, k_i (z_i, v_m), y_{mi}, v_m) +  \Gamma_i \tilde{\eta}_i,
\EN
where
$$\Gamma_i = \left [ \begin{array}{c}
                                B_{i} \\
                                 0
                                 \end{array}
                           \right ] K_{2im} + \left [ \begin{array}{c}
                                E_{im} \\
                                 0
                                 \end{array}
                           \right ] - L_i (F_{mim}  + D_{mi}K_{2im}).
                           $$
Since $\lim_{t \rightarrow \infty} \tilde{\eta}_i (t) = 0$ for $i = 1, \dots, N$,  the proof follows from Corollary \ref{coro3}.
\end{proof}

\bremark
Lemma \ref{lem301} is the reminiscent of the well known separation principle for the design of the Luenberger observer based output feedback control law.
What is worth noting is that the closed-loop system is a time-varying system when the graph is a switching graph.
\eremark

Combining Lemma \ref{lem301} with Lemmas \ref{thedo1} and  \ref{thedo2}, respectively, leads to the following two theorems.

\btheorem \label{themas1} Suppose Assumption  \ref{ass301} holds. Then,  \\
(i) if the graph $\bar{\mathcal{G}}_{\sigma(t)}$ is undirected,  the pair $(C_0, S)$ is observable, and $S$ is marginally stable, then, under Assumptions  \ref{ass0202} and \ref{ass0204}, Problem 2  is
solved by the distributed full information control law \eqref{control3011} with $\mu = 1$ and $L_0 = P C^T_{0}$ where $P$ is the unique positive definite solution of the inequality $P S^T + S P \leq 0$, and,
under the additional Assumption \ref{ass0203},  Problem 2  is
solved by the distributed  measurement output feedback control law (\ref{control3021x}) with $\mu = 1$ and $L_0 = P C^T_{m0}$ where $P$ is
the unique positive definite solution of the inequality $P S_m^T + S_m P \leq 0$;

(ii)   if  $y_{m0} = v$,  and none of the eigenvalues of $S$ has positive real part, then, under Assumptions  \ref{ass0202} and \ref{ass0204},
Problem 2  is solved by the distributed full information control law (\ref{control3011}) for any
$\mu >0$ and $L_0 = I_q$; and

(iii)   if  $y_{m0} = v_m$,  and none of the eigenvalues of $S_m$ has positive real part, then, under Assumptions  \ref{ass0202} to \ref{ass0204},
Problem 2  is solved by the distributed dynamic measurement output feedback control law (\ref{control3021x}) for any
$\mu >0$ and $L_0 = I_{q_m}$.
\etheorem

\btheorem \label{themas2} Suppose Assumption  \ref{ass301static} holds. Let  $\delta = \min \{Re (\lambda_i (H) \} $. Then, \\
(i) if  the pair $(C_0, S)$ is detectable, then, under Assumptions  \ref{ass0201}, \ref{ass0202} and  \ref{ass0204}, Problem 2 is
solved by the distributed full information control law \eqref{control3011} with $\mu \geq
\delta^{-1}$  and $L_0= P C_0^T$ where $P$ is the unique positive definite solution of the inequality
$PS^T + S P - P C^T_0 C_0 P + I_q \leq 0$,
and, under the additional Assumption \ref{ass0203}, Problem 2 is
solved  by the distributed measurement output feedback control law (\ref{control3021x}) with $\mu \geq
\delta^{-1}$ and $L_0= P C^T_{m0}$ where $P$ is the unique positive definite solution of the inequality
$P S^T_m + S_m P - P C^T_{m0} C_{m0} P + I_{q_m} \leq 0$;

(ii) if  $y_{m0} = v$, then, under Assumptions  \ref{ass0201}, \ref{ass0202} and \ref{ass0204},  Problem 2 is
solved by the distributed full information  control law \eqref{control3011} for sufficiently large $\mu$, and $L_0 = I_q$; and

(iii) if $y_{m0} = v_m$,
then, under Assumptions  \ref{ass0201} to \ref{ass0204},  Problem 2 is
solved by the distributed measurement output feedback control law (\ref{control3021x})
for sufficiently large $\mu$ and $L_0 = I_{q_m}$.
\etheorem

\begin{Remark}
In Parts (ii) and (iii) of Theorem \ref{themas2}, if none of the eigenvalues of $S$ or $S_m$ has positive real part, then
$\mu$ can be any positive real number.
\end{Remark}

\bremark
For the  case where $v = v_m$,  the control law (\ref{control3021x}) reduces to the following special form
\begin{equation} \label{e1.s4m}
\begin{split}
    u_i&=K_{1i} z_i+K_{2i}\eta_i, \quad i=1,\dots,N,\\
       \dot{z}_i&=A_iz_i+B_iu_i+E_i\eta_i +L_i(y_{mi} - C_{mi}z_i - D_{mi}u_i - F_{mi}\eta_i), \\
    \dot{\eta}_i&=S\eta_i+\mu L_0 \left(\sum_{j\in \mathcal {\bar{N}}_i(t)}a_{ij}(t) C_0  (\eta_j-\eta_i)\right), \\
\end{split}
\end{equation}
where $L_i\in \R^{n_i\times p_{mi}}$ are such that $(A_i - L_i C_{mi})$ are Hurwitz.

On the other hand, for the case where $v = v_u$, there is no measurable leader's signal $v_m$ to estimate,  the control law (\ref{control3021x}) reduces to the following special form
\begin{equation} \label{control3021u}
\begin{split}
u_i &=  [K_{1i} ~~ K_{2i}] z_i, \quad i=1,\dots,N,\\
\dot{z}_i & = \left [ \begin{array}{cc}
A_i  & E_{i} \\
0  & S
\end{array} \right ] z_i
+ \left [ \begin{array}{c}
B_i \\
0
\end{array} \right ] u_i + L_i (y_{mi} - [C_{mi}~~ F_{mi}] z_i  - D_{mi} u_i ).  \\
\end{split}
 \end{equation}
For this special case, the distributed observer is not needed and the control law is a purely decentralized one.
\eremark

\section{Some Variants and Extensions}

In this section, we will make some remarks on some variants and extensions of the problem studied in this chapter.

\subsection{Multiple leaders and containment control}
The containment control problem involves multiple leaders and the asymptotic tracking of the output of followers to a convex hull of the state variables of the multiple leaders \cite{r14}.
The problem formulation in Section \ref{sec1.4} also includes the containment control problem as a special case by appropriately interpreting the leader system and the tracking error $e_i$.
In fact, suppose there are multiple leaders of the following form:
\EQ \label{ml}
\dot{v}_i = S_i v_i, \; i = 1, \dots, l,
\EN
where, for $i = 1, \dots, l$, $v_i \in \R^{q_0}$ for some positive integer $q_0$, and $l$ is some integer greater than $1$.   Let $\mbox{Co} = \{\sum_{i=1}^{l} \alpha_i v_i, \alpha_i \geq 0, \sum_{i=1}^{l} \alpha_i = 1\} $. Then $\mbox{Co}$ is called the convex hull of the points $v_1, \dots, v_l$.
Let $v = \col (v_1, \dots, v_l)$

Now define, for $i = 1, \dots, N$, the output of the subsystem $i$ as $y_i = C_i x_i + D_i u_i + F_{1i} v$ for some matrix $F_{1i}$.
Denote the reference input of each follower by $r = F_{2} v$  where $F_2 = (\alpha_1, \dots, \alpha_l) \otimes I_{q_0}  $.
Let $e_i = y_i - r$. Then $e_i$ is in the form given in
(\ref{eq020101}) with $F_i = F_{1i} - F_2$. . Finally, let $S = \mbox{block diag} [S_1, \dots, S_l]$, and $C_0 = F_2$. Then the multiple leader systems (\ref{ml}) can be put in the standard form  (\ref{exom}).

It can be seen that the objective of making the tracking error $e_i$ approach the origin asymptotically implies the asymptotic convergence of the output of all follower subsystems to the convex hull $\mbox{Co}$.

\subsection{Local exogenous signals versus global exogenous signals}
Another variant of the  systems (\ref{eq020101}) is given as follows
\begin{equation} \label{eq020101l}
\begin{split}
  \dot{x}_i & =A_ix_i+B_iu_i+E_{i} v_i,\\
     y_{mi} & =C_{mi}x_i+D_{mi}u_i+ F_{mi} v_i,\\
    e_i & =C_ix_i+D_iu_i+F_{i} v_i,\quad i=1,\dots,N,
\end{split}
\end{equation}
where
\EQ \label{mlN}
\dot{v}_i = S_i v_i, \;~ i = 1, \dots, N,
\EN
for some constant matrices $S_i$. This formulation is actually contained in (\ref{eq020101}) by defining $v = \col (v_1, \dots, v_N)$,  and $S = \mbox{block diag} [S_1, \dots, S_N]$ and
redefining the matrices $E_{i}$, $ F_{mi}$, $F_{i}$, $i = 1, \dots, N$.

\subsection{Synchronized
reference generator and the output synchronization}

Given maps $\xi_i: [0, \infty) \rightarrow \R^p$ for $i = 1, \dots, N$ and a map $\bar{\xi}: [0, \infty) \rightarrow \R^p$, the elements of the set $\{\xi_i (\cdot): i =1, \dots, N\}$
are said to synchronize to $\bar{\xi} (\cdot)$ if $ \lim_{t \rightarrow \infty} ({\xi}_i (t) - \bar{\xi} (t)) = 0$ for all $i= 1, \dots, N$, and are said to synchronize if they synchronize to some
$\bar{\xi} (\cdot)$ \cite{tuna}.

Consider the following dynamic {compensator}
\begin{equation}\label{rdo}
    \dot{\eta}_i = S \eta_i +\mu L_0 \left(\sum_{j\in {\cal
N}_i(t)}a_{ij}(t) C_{0} (\eta_j-\eta_i)\right), ~ i = 1, \dots,  N,
 \end{equation} where $S \in \R^{q \times q}$,  $C_{0} \in \R^{p_0 \times q}$ are some given constant matrices,  ${\cal N}_i (t)$ denote the neighbor set of the node $i$ in the graph
 ${\mathcal{G}}_{\sigma(t)}$, and $\mu >0$ and $L_0 \in \R^{q \times p_0}$ are to be designed.

The compensator (\ref{rdo})  can be  obtained from (\ref{do}) by replacing  $\bar{\cal
N}_i (t)$ by  ${\cal
N}_i(t)$.

We assume the graph ${\mathcal{G}}_{\sigma(t)}$ satisfies the following assumption.

 \begin{Assumption}\label{ass501}
There exists a subsequence $\{i_k\}$ of $\{i:i=0,1,\dots\}$ with
$t_{i_{k+1}}-t_{i_k} \leq \nu$ for some positive $\nu$ such that the union graph
$\bigcup_{j=i_{k}}^{i_{k+1}-1} {\mathcal{G}}_{\sigma(t_j)}$ is connected.
\end{Assumption}

Then we have the following result.

\btheorem \label{themas10} Under Assumption  \ref{ass501}, \\
(i) Suppose the graph ${\mathcal{G}}_{\sigma(t)}$ is undirected,  $S$ is marginally stable, and the pair $(C_0, S)$ is observable. Then, with $\mu = 1$ and $L_0 = P C_0^T$ where the matrix $P$
is the unique positive definite matrix such that $P S^T + S P \leq 0$,
for any initial condition $\eta_i (0), ~i = 1, \dots, N$, the solution of
(\ref{rdo}) is such that
\begin{equation}
 \lim_{t \rightarrow \infty}  \left (   \eta_i (t) - e^{St} \frac{\sum_{j = 1}^{N} \eta_j (0)}{N} \right)  = 0
\end{equation}
exponentially.

(ii) If $C_0 = I_q$ and none of the eigenvalues of $S$ has positive real part, then, with $L_0 = I_q$ and any $\mu >0$,  and any initial condition $\eta_i (0), ~i = 1, \dots, N$, there exists some $\bar{\eta}_0 \in \R^q$ determined by $\eta_j (0),~ j = 1, \dots, N$ such that
 \begin{equation}
 \lim_{t \rightarrow \infty}  \left (   \eta_i (t) - e^{St}  \bar{\eta}_0 \right)  = 0
\end{equation}
exponentially.

\etheorem

\bremark
The proof of Part (i) of  Theorem \ref{themas10}  can be extracted from the proof of  Theorem 2 of \cite{sh3}.
Part (ii) of  Theorem \ref{themas10} was studied in Lemma 1 of \cite{scardovi09} which is in turn based on the result in \cite{moreau2004}. \eremark

\begin{Remark}
If the graph ${\mathcal{G}}_{\sigma(t)}$ is static and connected, and the pair $(C_0, S)$ is detectable, then, Theorem \ref{themas10} can be strengthened as follows. Let  $\lambda_2 (\mathcal{L})$
denote the smallest positive eigenvalue of
the Laplacian matrix $\mathcal{L}$ of the graph ${\mathcal{G}}$  and $P$
be the unique positive definite matrix such that $S {P}+{P}S^T-{P} C_0^T C_0{P}+I_q \leq 0$.  Then, for $\mu \geq \max\{1,   \frac{1}{\lambda_2 (\mathcal{L}) } \}$ and $L_0 = P C_0^T$,
the solution of
(\ref{rdo}) is such that
\begin{equation} \label{cpdostatic}
 \lim_{t \rightarrow \infty}  \left (   \eta_i (t) -  e^{St} \sum_{j = 1}^{N} r_j \eta_j (0) \right)  = 0,
\end{equation}
where $r = \col (r_1, \dots, r_N) \in \R^N$ is the unit vector  such that $r^T \mathcal{L} = 0$.
This special case  of Theorem \ref{themas10} is the direct result of  Lemma 1 of \cite{tuna}.
\end{Remark}

\begin{Remark}
The special case  with $\mu = 1$,  $C_0=I_q$, and $L_0 = I_q$ of the dynamic
compensator (\ref{rdo})
was proposed in \cite{wieland2011}, and was called synchronized
reference generators. Its main difference from  the distributed observer
\eqref{do} is that it does not contain a feedforward term {$a_{i0}(v-\eta_i)$}.
If we define  a virtual leader $\dot{v} = S v$, and let $\tilde{\eta}_i = ({\eta}_i  - v) $ and $ \tilde{\eta} = \mbox{col} (\tilde{\eta}_1, \dots, \tilde{\eta}_N)$, then,
the system  \eqref{rdo}  can be put in the following compact form
\begin{equation} \label{dcpdo}
    \dot{\tilde{\eta}}=\left (I_N\otimes S-\mu (L_{\sigma (t)} \otimes L_0 C_{0})\right )\tilde{\eta}.
    \end{equation}
By Theorem  \ref{themas10}, the compensator is not a distributed observer of the virtual leader $\dot{v} = S v$ because, for $i = 1, \dots, N$, the convergence of $\eta_i$ to $v$ happens only if $\eta_i (0)$ and $v (0)$ satisfy some equality. As a result, the control law \eqref{control3011} with the distributed observer  replaced  by the
synchronized reference generator (\ref{rdo})  will not solve the output regulation problem of (\ref{eq020101}) with the exosystem being $\dot{v} = S v,~ y_{m0} = C_0 v$.
Nevertheless, by the same technique as used in the proof of Lemma \ref{lem301},  it is possible to show that this control law can still make the output $y_i$, $i = 1, \dots, N$, of (\ref{eq020101}) synchronize to a signal of the form $e^{St}  \bar{\eta}_0$ for some $\bar{\eta}_0 $ determined by
$\eta_i (0)$,  $i = 1, \dots, N$.
\end{Remark}

\subsection{Discrete distributed observer}

To introduce our problem, let $\mathbb{Z}^+$ denote
the set of nonnegative integers, and $\sigma_d:\mathbb{Z}^+\rightarrow\mathcal {P}$ where $\mathcal
{P}=\{1,2,\dots,\rho\}$ is a piecewise constant switching signal in the
sense that there exists a subsequence $t_i$ of $\mathbb{Z}^+$, called switching
instants, such that $\sigma_d (t) = p$ for some $p \mathcal
{P}$  for $ t_i \leq t <
t_{i+1}$ for any $t_i\geq0$ and all  $t \in \mathbb{Z}^+$.

Consider the discrete-time counterpart of the linear system
(\ref{eq010801}) of the following form
\begin{equation}\label{eq010801d}
    {x}(t+1)=\left(I_N\otimes A - \mu F_{\sigma_d (t)}\otimes
    (B K)\right)x(t),~ t = 0, 1, \dots, \infty,
\end{equation}
where $A\in \R^{n\times n}$, $B\in \R^{n\times m}$,  $F_{\sigma_d (t)} \in \R^{n\times n}$ is a piecewise switching matrix, and
$\mu>0$ and $K\in \R^{m\times
n}$ are to designed.

\begin{Assumption}\label{ass10d}

(i) The switching times satisfy $t_{i+1}-t_i\geq \tau $ for some
positive integer $\tau >1$ for all $t_i$.

(ii) The matrix $F_{\sigma_d(t)}$ is symmetric for all $t \in \mathbb{Z}^+$,  and there exists a subsequence $\{i_k\}$ of $\mathbb{Z}^+$ with
$t_{i_{k+1}}-t_{i_k} \leq \nu$ for some positive $\nu$ such that
 all the eigenvalues of the matrix $\sum_{j=i_{k}}^{i_{k+1}-1}  F_{\sigma_d (j)}$ have positive real parts.
\end{Assumption}

Let $\bar{A}$ be the real Jordan form of $A$, $P$ be the nonsingular matrix such that $A = P^{-1} \bar{A} P$,   and $\bar{B} = P B$.  Then,
we have the following result:

\blemma \label{thedo1d}  Under  Assumption \ref{ass10d},  suppose all the eigenvalues of $A$ are semi-simple with modulus 1, and the pair $(A, B)$ is controllable. Then, with $K = B^T P^T PA$,  the system (\ref{eq010801d}) is asymptotically stable for all $\mu$ satisfying $$0 <
\mu\leq\min_{ p \in \mathcal {P}}\left\{\frac{1}{||
{F}_{p}\otimes
(\bar{A}^T\bar{B}\bar{B}^T\bar{A})||}\right\}.$$
 \elemma

\bremark
Lemma \ref{thedo1d} is taken from Lemma 3.1 of  \cite{sh4}.  As pointed out in Remark 2.2 of \cite{sh4}, the assumption that all the eigenvalues of $A$ are semi-simple with modulus 1 can be relaxed to the assumption that $A$ is marginally stable, i.e., all the eigenvalues of $A$ are inside the unit circle, and those eigenvalues of $A$ with modulus 1 are semi-simple.
\eremark

Now consider a discrete-time linear multi-agent system of the following form:
\begin{equation} \label{deq020101}
\begin{split}
  {x}_i (t+1)  & =A_ix_i (t) +B_iu_i (t) +E_{i} v (t),\\
     y_{mi}(t) & =C_{mi}x_i (t) +D_{mi}u_i (t) + F_{mi} v (t),\\
    e_i (t) & =C_ix_i (t)+D_iu_i (t) +F_{i} v (t), \quad i=1,\dots,N, ~ t = 0, 1, \dots, \infty,
\end{split}
\end{equation}
where $x_i\in \R^{n_i}$, $y_{mi}\in \R^{p_{mi}}$, $e_i\in \R^{p_i}$ and
$u_i\in \R^{m_i}$ are the state, measurement output, error output, and
input of the $i$th subsystem, and $v \in \R^q$ is the exogenous signal
generated by a discrete-time exosystem
as follows:
\EQ \label{dexo}
v (t+1) = S v (t), ~y_{m0} (t) = C_{0} v (t), ~ t = 0, 1, \dots, \infty,
\EN
where  $S \in \R^{q \times q}$ is marginally stable.  Let $\bar{\mathcal{G}}_{\sigma_d (t)}$ be a switching graph associated with  (\ref{deq020101}) and
 (\ref{dexo}) whose weighted adjacency matrix is denoted by
  $\bar{\mathcal {A}}_{\sigma_d (t)}=[a_{ij} (t)]_{i,j=0}^N$.

Define the following dynamic compensator
\begin{equation}\label{ddo}
   {\eta}_i (t+1) = S \eta_i (t) +\mu L_0 \left(\sum_{j\in \bar{\cal
N}_i(t)}a_{ij}(t) C_{0} (\eta_j (t) -\eta_i (t))\right), ~  i = 1, \dots,  N,
 \end{equation} where  $\eta_0 = v$, and the scaler $\mu>0$ and the matrix $L_0 \in \R^{q \times p_0}$ are to be designed.
The system (\ref{ddo}) is called  a
{discrete distributed observer candidate} for $v$, and is called
a {discrete distributed observer} for $v$ if, for any $v (0)$ and
$\eta_i (0)$,
\begin{equation}
    \lim_{t \rightarrow \infty} ({\eta}_i (t) - v (t)) = 0,~~ i = 1, \dots,  N.
 \end{equation}
Let $\tilde{\eta}_i = ({\eta}_i  - v ) $, and $ \tilde{\eta} = \mbox{col} (\tilde{\eta}_1, \dots, \tilde{\eta}_N)$. Then,
the system  \eqref{ddo}  can be put in the following compact form
\begin{equation} \label{dcpdo}
    {\tilde{\eta}} (t+1) =\left ( (I_N\otimes S)-\mu (H_{\sigma_d (t)} \otimes L_0 C_{0})\right )\tilde{\eta} (t),~ t = 0, 1, \dots, \infty.
    \end{equation}
Since $\left ( (I_N\otimes S)-\mu (H_{\sigma_d (t)} \otimes L_0 C_{0})\right )^T = \left ( (I_N\otimes S^T)-\mu (H^T_{\sigma_d (t)} \otimes L^T_0 C^T_{0})\right )$,
it is not difficult to deduce the conditions on various matrices and the graph for guaranteeing the stability property of (\ref{dcpdo}) from Lemma \ref{thedo1d}. Consequently, the discrete counterparts of Theorems \ref{themas1} and \ref{themas2}  can be obtained.

%which is in the same form as (\ref{eq010801d}), and

\subsection{Distributed adaptive observer}%
%******************************************************************************************************************

A drawback of the distributed observer (\ref{do}) is that the matrix $S$ or $S_m$ is used by the controller of every follower.  A more realistic controller should only allow those followers who are the children of the leader to know the matrix $S$ or $S_m$. In \cite{cai-huang-2015}, assuming $y_{m0} = v$ and $S_m = S$, a distributed  adaptive observer was proposed as follows:
\begin{equation} \label{doxx}
    \begin{split}
      \dot{S}_i&=\mu_1\sum_{j=0}^Na_{ij}(t)(S_j-S_i),~ i = 1,\cdots, N, \\
      \dot{\eta}_i&= S_i\eta_i+\mu_2\sum_{j=0}^Na_{ij}(t)(\eta_j-\eta_i)
      \end{split}
\end{equation}
where $S_i\in R^{q\times q}$,
$\eta_i\in R^q$, $S_0=S$, $\eta_0=v$, $\mu_1,\mu_2>0$.

Moreover, the following
result was  established  in \cite{cai-huang-2015}

\begin{Lemma}\label{lem33}
Consider the system \eqref{doxx}.   Under Assumption \ref{ass301}, suppose all the eigenvalues of the matrix $S$ are semi-simple with zero-real parts.
Then, for any $\mu_1,\mu_2>0$ and for any initial condition
   $S_i(0)$, $\eta_i(0)$ and $v(0)$, for $i=1,\dots,N$, $S_i(t)$ and $\eta_i(t)$ exist and are bounded
   for all $t\geq 0$, and
  \begin{equation}\label{ccl}
    \lim_{t\rightarrow\infty}(S_i(t)-S)=0,\ \lim_{t\rightarrow\infty}(\eta_i(t)- v(t))=0.
  \end{equation}
  \end{Lemma}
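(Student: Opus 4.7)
The plan is to exploit the cascade structure of (\ref{doxx}): the $S_i$-dynamics are autonomous and do not see $\eta_i$, so I would first establish $S_i(t) \to S$ exponentially, and then treat the $\eta_i$-dynamics as a switched linear system of the type in (\ref{cpdo}) perturbed by $S_i - S$ in both coefficient and forcing.

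Set $\tilde{S}_i = S_i - S$. Since $S_0 = S$, we have $\tilde{S}_0 \equiv 0$ and $\dot{\tilde{S}}_i = \mu_1 \sum_{j=0}^N a_{ij}(t)(\tilde{S}_j - \tilde{S}_i)$. Vectorising each $\tilde{S}_i$ and stacking yields $\dot{\tilde{s}} = -\mu_1 (H_{\sigma(t)} \otimes I_{q^2}) \tilde{s}$, which matches (\ref{eq010801}) with $A = 0$, $B = K = I_{q^2}$, $F_{\sigma(t)} = H_{\sigma(t)}$. By Remark \ref{remark010601}, Assumption \ref{ass301} implies Assumption \ref{ass10}, so Part (ii) of Lemma \ref{thedo1} delivers asymptotic stability, which under the dwell-time hypothesis is in fact uniformly exponential; hence each $\tilde{S}_i(t) \to 0$ exponentially. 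Next set $\tilde{\eta}_i = \eta_i - v$ and $\tilde{\eta}_0 = 0$. Using $S_i\eta_i - Sv = S\tilde{\eta}_i + \tilde{S}_i\tilde{\eta}_i + \tilde{S}_i v$ and $\eta_j - \eta_i = \tilde{\eta}_j - \tilde{\eta}_i$, a direct calculation gives
$$
\dot{\tilde{\eta}}_i = S\tilde{\eta}_i + \mu_2 \sum_{j=0}^N a_{ij}(t)(\tilde{\eta}_j - \tilde{\eta}_i) + \tilde{S}_i \tilde{\eta}_i + \tilde{S}_i v,
$$
and stacking with $\tilde{\eta} = \col(\tilde{\eta}_1, \dots, \tilde{\eta}_N)$, $E(t) = \mbox{block~diag}(\tilde{S}_1(t), \dots, \tilde{S}_N(t))$, and $f(t) = \col(\tilde{S}_1(t) v(t), \dots, \tilde{S}_N(t) v(t))$ produces
$$
\dot{\tilde{\eta}} = \bigl[(I_N \otimes S) - \mu_2 (H_{\sigma(t)} \otimes I_q)\bigr] \tilde{\eta} + E(t)\tilde{\eta} + f(t).
$$

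Because the eigenvalues of $S$ are semi-simple with zero real parts, $S$ has no eigenvalue with positive real part and $\|e^{St}\|$ is bounded, so $v(t)$ is bounded. Applying Part (ii) of Lemma \ref{thedo1} with $A = S$, $B = K = I_q$, $F_{\sigma(t)} = H_{\sigma(t)}$, and any $\mu_2 > 0$, the nominal system $\dot{\tilde{\eta}} = [(I_N \otimes S) - \mu_2(H_{\sigma(t)} \otimes I_q)]\tilde{\eta}$ is uniformly exponentially stable. Combined with exponential decay of $\tilde{S}_i$ and boundedness of $v$, both $\|E(t)\|$ and $\|f(t)\|$ decay exponentially to zero; a standard vanishing-perturbation argument for uniformly exponentially stable linear time-varying systems then gives global existence, boundedness of $\tilde{\eta}(t)$ on $[0,\infty)$, and $\tilde{\eta}(t) \to 0$, which together with the first step yields (\ref{ccl}).

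The main obstacle is the $E(t)\tilde{\eta}$ term: it is a state-dependent perturbation of a \emph{switched} nominal system, so Lemma \ref{lem0} cannot be invoked directly. The decisive ingredient is that the dwell-time plus jointly-connected hypothesis upgrades the asymptotic stability supplied by Lemma \ref{thedo1}(ii) to \emph{uniform} exponential stability with a decay rate independent of $\sigma(\cdot)$; once this is in hand, one picks $T$ such that $\|E(t)\|$ sits below that rate for $t \geq T$, bounds the solution on $[0,T]$ by linear ODE theory, and uses a Gronwall estimate plus the exponential decay of $f(t)$ on $[T,\infty)$ to close the argument.
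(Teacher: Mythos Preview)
The paper does not actually prove Lemma~\ref{lem33}: it is quoted verbatim from \cite{cai-huang-2015} with no argument given, so there is no ``paper's own proof'' to compare against. That said, your proposal is the natural cascade argument and is essentially correct. The decomposition $\tilde S_i\to 0$ first, then $\tilde\eta_i\to 0$ via a perturbed version of \eqref{cpdo}, is exactly what one expects, and your algebra for the $\tilde\eta_i$-equation is right.

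Two small points worth tightening. First, your invocation of Lemma~\ref{thedo1}(ii) for the nominal $\tilde\eta$-system is fine, but the lemma as stated only asserts asymptotic stability; you correctly note that the dwell-time assumption and the finiteness of $\mathcal P$ upgrade this to uniform exponential stability (bounded linear time-varying systems that are uniformly asymptotically stable are uniformly exponentially stable), and this step deserves an explicit sentence rather than being folded into the last paragraph. Second, the semi-simplicity hypothesis on the eigenvalues of $S$ is used precisely where you use it---to guarantee $\|e^{St}\|$ is bounded so that $f(t)=\col(\tilde S_i(t)v(t))$ decays exponentially---and nowhere else; Lemma~\ref{thedo1}(ii) itself only needs that $S$ has no eigenvalues with positive real part. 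It is worth flagging that this is the sole place the extra hypothesis enters, since otherwise a reader may wonder why ``zero real parts'' is not enough.
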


\begin{Remark} Lemma  \ref{lem33} holds for any $S$ if the graph is static and connected.
\end{Remark}

  \section{Concluding Remarks}

In  this chapter,  we have presented a unified framework for handling the cooperative output regulation problem of multi-agent systems using the distributed observer approach.
The main result not only contains various versions of the cooperative output regulation problem for linear multi-agent systems in the literature as special cases, but also
present a more general distributed observer. We have also simplified the proof of the main result by more explicitly utilizing the separation principle and
the certainty equivalence principle. In summary, we conclude that, as long as a distributed observer exists,
the cooperative output regulation problem of multi-agent systems is solvable if and only if
the classical output regulation problem of each subsystem is solvable by the classical way as summarized in Section 3.

\begin{acknowledgement}
This work has been supported in part by the Research Grants
Council of the Hong Kong Special Administration Region under grant
No. 412813, and in part by National Natural Science Foundation of
China under grant No. 61174049.
%If you want to include acknowledgments of assistance and the like at the end of an individual chapter please use the \verb|acknowledgement| environment -- it will automatically render Springer's preferred layout.
\end{acknowledgement}

\section*{Appendix}
\addcontentsline{toc}{section}{Appendix}

\section*{Appendix: Graph}
 A digraph $\mathcal {G}=(\mathcal {V},\mathcal {E})$
consists of a finite set of nodes $\mathcal {V}=\{1,\dots,N\}$ and an
edge set $\mathcal {E} = \{(i,j), i, j \in \mathcal {V}, i\neq j
\}$.  An edge from node $i$ to node $j$ is denoted by $(i,j)$. The node $i$ is called the father of the node $j$ and
the node $j$  the child of the node $i$. The node $i$ is also called the neighbor of node $j$. If the digraph
$\mathcal{G}$ contains a sequence of edges of the form $\{({i_1},
{i_2}), ({i_2}, {i_3}), \dots, ({i_{k}}, {i_{k+1}})\}$, then the set
$\{({i_1}, {i_2}), ({i_2}, {i_3}), \dots, ({i_{k}}, {i_{k+1}}) \}$
is called a directed path of $\mathcal{G}$ from ${i_1}$ to ${i_{k+1}}$, and
node ${i_{k+1}}$ is said to be reachable from node ${i_{1}}$.   A
digraph is said to be connected if it has a node from which there exists a directed path to every other node.  The edge $(i,j)$ is called
undirected if $(i,j)\in \mathcal {E}$ implies $(j,i)\in \mathcal
{E}$. The digraph is called undirected if every edge in $\mathcal {E}$
is undirected.
 A graph
$\mathcal{G}_s=(\mathcal{V}_s,\mathcal{E}_s)$ is called a subgraph
of $\mathcal{G}=(\mathcal{V},\mathcal{E})$ if
$\mathcal{V}_s\subseteq \mathcal {V}$ and $\mathcal{E}_s\subseteq
\mathcal{E}\cap (\mathcal{V}_s\times \mathcal{V}_s)$. Given a set of
$r$ graphs $\mathcal{G}_i=(\mathcal{V},\mathcal{E}_i)$, $i=1,\dots,r$,
the graph $\mathcal{G}=(\mathcal{V},\mathcal{E})$ with
$\mathcal{E}=\bigcup_{i=1}^r\mathcal{E}_i$ is called the union of
graphs $\mathcal{G}_i$ and is denoted by
$\mathcal{G}=\bigcup_{i=1}^r\mathcal{G}_i$. The weighted adjacency
matrix $\mathcal{A}=[a_{ij}]_{i,j = 1}^{N}$ of $\mathcal{G}$ is
defined as $a_{ii}=0$; for $i\neq j$,
 $a_{ij} >0  \Leftrightarrow (j,i)\in \mathcal{E}$, and $a_{ij} =  a_{ji}$ if the edge $(j,i)$ is undirected.
 The Laplacian of $\mathcal{G}$ is defined
 as $\mathcal{L}=[l_{ij}]_{i,j = 1}^{N}$, where $l_{ii} =\sum_{j=1}^Na_{ij} $, $l_{ij} =-a_{ij}$ for $i\neq j$. To define a switching graph, let $\mathcal{P}=\{1,2,\dots,\rho\}$ for
some positive integer $\rho$. We call a time function $\sigma:
[0,+\infty)\rightarrow \mathcal{P}=\{1,2,\dots,\rho\}$ a piecewise
constant switching signal if there exists a sequence $t_0=0 <t_1
<t_2,\dots$ satisfying $\lim_{i \rightarrow \infty} t_i = \infty$ such that, for any $k\geq 0$, for all $t
\in [t_k, t_{k+1})$, $\sigma (t) = i$ for some  $i \in \mathcal{P}$.
$\mathcal{P}$ is called the switching index set.
Given a piecewise
constant switching signal $\sigma:
[0,+\infty)\rightarrow \mathcal{P}=\{1,2,\dots,\rho\}$, and a set of
$\rho$ graphs $\mathcal{G}_i=(\mathcal{V},\mathcal{E}_i)$, $i=1,\dots,\rho$  with the corresponding weighted adjacency
matrices being denoted by $\mathcal{A}_i$, $i = 1,\dots, \rho$,  we call a time-varying graph $\mathcal{G}_{\sigma
(t)}=(\mathcal{V},\mathcal{E}_{\sigma (t)})$ a switching graph with the weighted adjacency
matrix $\mathcal{A}_{\sigma
(t)}$ if,  for any $k\geq 0$, for all $t
\in [t_k, t_{k+1})$, $\mathcal{A}_{\sigma
(t)}  = \mathcal{A}_i$ for some  $i \in \mathcal{P}$.

\end{document}